\newmdenv[linecolor=black,skipabove=\topsep,skipbelow=\topsep,
leftmargin=-3pt,rightmargin=-3pt,
innerleftmargin=6pt,innerrightmargin=6pt]{mybox}
 \def\tto{\;{\lower 1pt
\hbox{$\rightarrow$}}\kern -10pt \hbox{\raise 2pt
\hbox{$\rightarrow$}}\;} \def\Hat{\widehat} 
\def\Bar{\overline} \def\ra{\rangle} \def\la{\langle}
\def\ve{\varepsilon} \def\epsilon{\varepsilon} \def\B{\mathbb B}
 \def\R{\mathbb R} 
 \def\N{\mathbb N}
 \def\gph{\mbox{\rm gph}}
 \def\epi{\mbox{\rm epi}}
\def\dom{\mbox{\rm dom}}
 \def\emp{\emptyset} 
\def\oR{\Bar{\R}} \def\lm{\lambda}  
   \def\Om{\Omega}
 \def\emp{\emptyset} 
\def\oR{\Bar{\R}} \def\lm{\lambda}
\setlist[enumerate,1]{itemsep=0.0ex,parsep=0.5ex,label={\rm(\alph*)},leftmargin=*,
align=left} \newcounter{lk}
\begin{document} 
\begin{center}
{\sc\bf Qualitative Properties and Generalized Differentiation of Nearly Convex Optimal Value Functions with Applications to Duality in Optimization}\\[1ex]
{\sc V. S. T. Long\footnote{Faculty of Mathematics and Computer
Science, University of Science,
Ho Chi Minh City, Vietnam.}$^,$ \footnote{Vietnam National University, Ho Chi Minh City, Vietnam, (email: vstlong@hcmus.edu.vn).},
{\sc B. S. Mordukhovich}\footnote{Department of Mathematics and Center for Artificial Intelligence and Data Science, Wayne State
University, Detroit, MI 48202, USA (email: aa1086@wayne.edu). Research of this author was partly supported by the US National Science Foundation under grant DMS-2204519 and by the Australian Research Council under Discovery Project DP-190100555.},
{\sc N. M. Nam}\footnote{Fariborz Maseeh Department of Mathematics and Statistics,
Portland State University, Portland, OR 97207, USA (email: mnn3@pdx.edu). }, {\sc L. White}\footnote{Fariborz Maseeh Department of Mathematics and Statistics, Portland State University, Portland, OR
97207, USA (email: whitelen@pdx.edu ).}}
\end{center}
\small{\bf Abstract.} This paper investigates the class of optimal value/marginal functions in the general framework of parametric optimization in locally convex topological vector spaces and their specifications. We address some qualitative and generalized differential properties of such functions under near convexity of their initial data. In this way, several fundamental results on nearly convex sets and functions in infinite-dimensional spaces are established and then applied to the study of optimal value functions. The obtained qualitative properties for this class include lower semicontinuity, Lipschitzian stability,  near convexity, etc. Subsequently, we derive calculus rules and representation formulas for $\epsilon$-subdifferentials of optimal value functions and their Fenchel conjugates in both convex and nonconvex settings. Applications of the main results are given to Fenchel and Lagrangian duality in infinite-dimensional constrained optimization by using Fenchel conjugates of set-valued mappings and the developed tools of near convex analysis. \\[1ex]
{\bf Keywords.} Optimal value functions, Fenchel conjugates, nearly convex sets and functions, constrained optimization, $\epsilon$-subdifferentials, duality in optimization\\[1ex]
\noindent {\bf 2020 Mathematics Subject Classification.} 49J52, 46N10, 90C26, 90C31
	
\newtheorem{theorem}{Theorem}[section]
\newtheorem{proposition}[theorem]{Proposition}
\newtheorem{lemma}[theorem]{Lemma}
\newtheorem{corollary}[theorem]{Corollary}
\theoremstyle{definition}
\newtheorem{remark}[theorem]{Remark}
\newtheorem{definition}[theorem]{Definition}
\newtheorem{example}[theorem]{Example}\vspace*{-0.2in}
	
\normalsize
\section{Introduction}\label{intro}\vspace*{-0.1in}

The optimal value function, often referred to as the marginal function, arises naturally in the study of parametric optimization problems. It quantifies the dependence of the optimal value on perturbations of the data and serves as a fundamental object in sensitivity aspects, variational analysis, and duality theory. Besides these disciplines, the importance of marginal functions has been well recognized in applications to systems control, bilevel programming, statistics, economics, machine learning, and other areas of mathematical and applied sciences; see, e.g., the books \cite{bauschke2017convex,bonnans2000perturbation,mordukhovich2006variational,mordukhovich2018variational,mordukhovich2022convex,Rockvariational,thibault2023,zalinescu2002convex} and the references therein.

The class of the \emph{optimal value/marginal functions} under consideration is defined by
\begin{equation}\label{optimalfunction}
\mu(x): =\inf\big\{\phi(x, y) \mid y \in F(x)\big\}, \; x \in X,
\end{equation}
where $X$ and $Y$ are locally convex Hausdorff topological vector spaces (LCTV spaces for brevity), where $\phi \colon X \times Y \to \overline{\mathbb{R}}: = \mathbb{R} \cup \{-\infty\}\cup\{\infty\}$ is an extended-real-valued function, and where $F \colon X \rightrightarrows Y$ is a set-valued mapping/multifunction. 

Since marginal functions of type \eqref{optimalfunction} are {\em intrinsically nondifferentiable}, even for the case of very simple smooth initial data, {\em generalized differentiation} of them---in one or another sense--has played a crucial role in their study and a variety of applications; see, e.g.,  \cite{an2020differential,bonnans2000perturbation,hai25,mordukhovich2006variational,mordukhovich2005variational,mordukhovich2022convex,mordukhovich2012variational,mordukhovich2009subgradients,Rockvariational,thibault2023,zalinescu2002convex} with the vast bibliographies therein.

The thrust of this paper is to incorporate the notion of {\em near convexity} (in particular, its {\em int-nearly convex} version, which has never been on the front line of generalized convexity) for the study and applications of optimal value functions in LCTV spaces. Near convexity (which sometimes referred to as ``almost convexity") goes back to Minty \cite{minty} in finite-dimensional spaces. Rockafellar \cite{rock70} introduced another notion, called virtual convexity, in the setting of smoothly reflexive Banach spaces (which includes all separable reflexive ones) and showed that his notion reduced to that by Minty in finite-dimensional spaces. In our infinite-dimensional study below, we follow Minty's line in the general LCTV spaces. 

Besides establishing natural conditions for the near convexity of marginal functions, we investigate other qualitative properties of them lower and upper semicontinuity and Lipschitzian behavior. After that, our attention is paid to deriving formulas to represent the Fenchel conjugates ans $\ve$-subgradients of optimal value functions under the near convexity assumptions on the given data in LCTV spaces. 

Applications of the established results are provided to duality in constrained optimization in both Fenchel and Lagrange forms. To our duality investigations, consider the problem
\begin{equation}\label{cvx_opt}
\text{minimize }\;f(y)\;
\text{ subject to}\;y \in \Omega
\end{equation}
with an extended-real-valued cost. To connect \eqref{cvx_opt} with \eqref{optimalfunction}, suppose that $\phi(0, y) = f(y)$ and $F(0) = \Omega$.  Then problem~\eqref{cvx_opt} can be rewritten in the perturbed form
\begin{equation}\label{primal problem}
\text{minimize }\;\phi(0,y)\;
\text{ subject to }\; y \in F(0).
\end{equation}
Based on the obtained formula for the Fenchel conjugate of the optimal value function \eqref{optimalfunction}, we construct a new dual problem for \eqref{primal problem}. This dual problem is defined using the Fenchel conjugate of the cost function $\phi$ and the notion of Fenchel conjugate for set-valued mappings, which has been recently introduced in \cite{nam2024notion}. The established Fenchel duality results are then applied to Lagrangian duality in optimization problems with inequality constraints.

The rest of the  paper is structured as follows. Section~\ref{sectionpre} contains basic definitions and fundamental elements of variational and convex analysis that are used throughout the paper. Section~\ref{near convexity} provides essential tools for analyzing nearly convex sets and functions in infinite-dimensional spaces with the particular emphasis on their properties and behavior under linear operations. In Section~\ref{sectionproperties}, we study qualitative properties of optimal value functions including semicontinuity, near convexity, and Lipschitzian behavior. Section~\ref{sectionsubdifferential} is devoted to calculating Fenchel conjugates of optimal value functions, which Section~\ref{sec:subdiff} establishes precise formulas for representing $\ve$-subgradients of such functions and their conjugates. In Sections~\ref{sec:duality} and \ref{sec:lagr}, we present applications of the obtained results to duality theory of constrained optimization in the Fenchel and Lagrangian frameworks, respectively. Finally, Section~\ref{secconcluion} contains concluding remarks and discusses some directions of the future research.

Unless otherwise stated, the spaces under consideration are {\em LCTV}. The bilinear form $\langle \cdot, \cdot \rangle \colon X^* \times X \to \mathbb{R}$ on the product of $X$ and its dual $X^*$ is defined by $\langle x^*, x \rangle: = x^*(x)$.
\vspace*{-0.2in}

\section{Basic Definitions and Preliminaries}\label{sectionpre}\vspace*{-0.1in}

This section provides a brief overview of fundamental notions and key properties of convex and nearly convex sets, as well as convex and nearly convex functions, which will be used throughout the paper. For a more comprehensive treatment of the corresponding notions of convex analysis, we refer the reader to \cite{mordukhovich2022convex, rockafellar1997convex, zalinescu2002convex}. The reader can also find below more references and discussions on nearly convex sets and functions.

As usual, we adopt the standard conventions of infinite arithmetic, which can be found in \cite{zalinescu2002convex}. In particular, we have $\sup \emptyset =-\infty$, $\inf \emptyset =\infty$,
$$
\infty+\infty=\infty,\; a + \infty = \infty,\;a + (-\infty) = a -\infty = -\infty\;\text{ for any }\;a \in \R.
$$
where $\mathbb R$ stands for the collection of real numbers. Denote
$\mathbb R_+:=[0,\infty)$.

A set $\Om\subset X$ is  \textit{convex} if $\alpha x+(1-\alpha)y\in \Om$ for all $0\leq \alpha \leq 1$ and all $x,y\in \Om$. The topological interior and closure of $\Om$ are denoted by ${\rm int}\, \Om$ and $\overline{\Om}$, respectively. Given a function $f\colon X\to \overline{\mathbb R}$, the \textit{epigraph}, {\em strict epigraph}, and \textit{domain} of $f$ are defined by
$$
\epi\,f:=\{(x,\lambda)\in X\times \mathbb R\;|\; f(x)\leq \lambda\},\;\epi_s\, f=\{(x, \lambda)\;|\; f(x)<\lambda\},\;\dom\, f=\{x\in X\;|\; f(x)<\infty\},
$$
respectively. A function $f$ is \textit{proper} if $\dom\, f\neq \emptyset$ and $f(x)>-\infty$ for all $x\in X$. We also denote and define the \textit{finiteness domain} of $f$ by
\begin{equation}\label{intrinsicdomain}
\mbox{\rm dom}_{\mathbb R}\,f=\{x\in X\mid f(x)\in \R\}.
\end{equation}

Recall that $f$ is a \textit{convex function} if $\epi\, f$ is a convex set in $X\times \mathbb R$. It is easy to see that $f$ is a convex function if and only if $\epi_s\,f$ is a convex set in $X\times \R$.

A function $f$ is \textit{lower semicontinuous} at $x_0\in X$ if for each $\lambda\in \R$ with $\lambda<f(x_0)$, there exists a neighborhood $V$ of $x_0$ such that
$\lambda<f(x)$ for all $x\in V$.  We also say that $f$ is {\em upper semicontinuous} at $x_0$ if the function  $-f$ is lower semicontinuous at this point.  

The {\em closed hull/lower semicontinuous envelope} of $f$ on $X$, denoted by $\Bar f$, is defined as the function for which $\epi\, \Bar f=\Bar{\epi\, f}$. The version of this at a given {\em point} $x_0\in X$ is given by
\begin{equation}\label{closedf}
\Bar f(x_0):=\liminf_{x\to x_0}f(x)=\sup_{V\in \mathcal{N}(x_0)}\inf_{x\in V}f(x)\; \text{ for }x_0\in X,
\end{equation}
where $\mathcal{N}(x_0)$ denotes the collection of all neighborhoods of $x_0$. We say that
$f$ is {\em bounded from below} on a set $\Omega\subset X$ if there exists $\lambda\in \R$ such that
\begin{equation*}
\lambda\leq f(x)\; \mbox{ for all }\;x\in \Omega.
\end{equation*}

The next notion plays a crucial role in duality theory. The \textit{Fenchel conjugate} of $f$ is the function $f^*\colon X^*\to \overline{\mathbb R}$ defined by
$$
f^*(x^*):=\sup\big\{\la x^*,x\ra-f(x)\;\big|\; x\in X\big\}.
$$
The \textit{Fenchel biconjugate}  $f^{**}: X\to\overline{\mathbb R}$ of $f$ is 
given by
$$
f^{**}(x)=\sup\big\{\langle x^*,x\ra -f^*(x^*)\;\big|\; x^* \in X^*\big\}.
$$

Let us now define the central notions used throughout this paper as discussed in Section~\ref{intro}.

\begin{definition}\label{def34}
Given  a set $\Omega\subset X$ and a function  $f\colon X\to \oR$, we say that:

{\bf(i)} The set $\Om$ is {\em nearly convex} if there exists a convex set $C\subset X$ such that
$C\subset \Omega\subset \overline{C}$.

{\bf(ii)} The set $\Omega$ is {\em int-nearly convex} if there exists a convex set $C\subset X$ with $\mbox{\rm int}\,C\neq\emptyset$ such that $C\subset \Omega\subset \overline{C}$.

{\bf(iii)} The function $f$ is {\em nearly convex} (resp., {\em int-nearly convex}) if the set $\epi\, f$ is nearly convex (resp., int-nearly convex) in $X\times\R$.
\end{definition}\vspace*{-0.1in}

Considering further a \textit{set-valued mapping} $F\colon X\rightrightarrows Y$ with its {\em domain} and \textit{graph}
$$
\dom\, F:=\big\{x\in X\;\big|\; F(x)\neq \emptyset\big\},\quad\gph\,F=\big\{(x,y)\in X\times Y\;\big|\; y\in F(x)\big\},
$$
we say that $F$ is \textit{convex} if $\gph\, F$ is a convex set, and that $F$ is {\em nearly convex} (resp., \textit{int-nearly convex}) if $\gph\,F$ is a nearly convex (resp., int-nearly convex) set. The mapping $F$ is said to be \textit{proper} if $\dom\,F\ne\emp$. 

In the definition of nearly convex sets, we follow the pattern suggested by Minty \cite{minty} in finite-dimensional spaces. This pattern has been developed and applied in more recent publications in finite \cite{heinz,bot,huy2023nearly,nam2025near} and infinite \cite{LMN} dimensions. It seems that the particular emphasis on int-near convexity and its applications has never been explored in the literature.

\begin{remark}\label{remark1}
Observe that in Definition~\ref{def34}(i), we have $\overline{\Omega} = \overline{C}$. This tells us that if $f$ is nearly convex in (iii), then its closed hull $\Bar f$ is convex. Consequently, if $f$ is both nearly convex and lower semicontinuous, then it is a convex function.
\end{remark}\vspace*{-0.1in}

 Next we recall the classical definition of $\epsilon$-subgradients for extended-real-valued functions, which may not be necessarily convex or nearly convex.
 
\begin{definition}\label{def:subgrad}
Let $f\colon X\to \overline{\mathbb R}$, let $x_0\in\mbox{\rm dom}_{\mathbb R}\,f$, and let $\epsilon\in\mathbb R_+$. The \textit{$\epsilon$-subdifferential} of $f$ at $x_0$ is the collections of \textit{$\epsilon$-subgradients} defined by
\begin{equation}\label{subgrad}
\partial_\epsilon f(x_0):=\big\{x^*\in X^*\;\big|\: \la x^*, x-x_0\ra \leq f(x)-f(x_0)+\epsilon\;\text{ for all }\;x\in X\big\}.
\end{equation}
We put $\partial_\epsilon f(x_0):=\emptyset$ if  $x_0\notin \mbox{\rm dom}_{\mathbb R}\,f$.
\end{definition}\vspace*{-0.1in}

It is easy to see that $\dom\, \partial_\epsilon f\subset \dom_{\R}\,f$, and that if $\partial_\epsilon f(x_0)\neq \emptyset$ for some $\epsilon\in \mathbb R_+$, then  $f$ is proper. Moreover, for $0\leq \epsilon_1\leq \epsilon_2$ we have
$$
\partial_{0} f(x_0)\subset \partial_{\epsilon_1} f(x_0)\subset \partial_{\epsilon_2} f(x_0)\;\mbox{ and }\;\partial_{\epsilon} f(x_0)=\bigcap_{\eta >\epsilon} \partial_{\eta} f(x_0)\;\text{ whenever }\;\epsilon\in \mathbb R_+.
$$
Given a subset $\Om\subset X$, the \textit{indicator function} $\delta(\cdot; \Om)\colon X\to \overline{\mathbb R}$ and the \textit{support function} $\sigma_\Om: X^*\to \R$ of $\Om$ are defined, respectively, by
$$\delta(x;\Om):=\begin{cases}
0     & \text{ if } x\in \Om, \\
\infty     & \text{ otherwise,}
\end{cases}$$
\begin{equation}\label{supportfunction}
\sigma_\Om(x^*):=\sup\{\la x^*,x\ra\mid x\in \Om\}\;\text{ for }\;x^*\in X^*.
\end{equation}

Given $\epsilon\in\mathbb R_+$, the $\epsilon$-{\em normal cone} of $\Om$ at $x_0\in \Om$ is
\begin{equation}\label{normalcone}
N_\epsilon(x_0;\Om):=\partial_\epsilon \delta(x_0;\Om)=\{x^*\in X^*\;|\; \la x^*, x-x_0\ra\leq \epsilon \text{ for all } x\in \Om\}.
\end{equation}
Recall also that the \textit{infimal convolution} of two proper functions $f,h \colon X\to \overline{\mathbb R}$ is
\begin{equation}\label{inf-conv}
(f\square h)(x):=\inf \{ f(x_1)+h(x_2)\;|\; x_1+x_2=x\}\; \text{for }x\in X.
\end{equation}

We conclude this section by recalling the notion of $\epsilon$-coderivatives for set-valued mappings.

\begin{definition}\label{cod} Given a multifunction $F\colon X\rightrightarrows Y$ and a number $\epsilon\in \mathbb R_+$, the $\epsilon$-\textit{coderivative} of $F$ at $(x_0,y_0)\in\gph\, F$ is a set-valued mapping $D^*_\epsilon F(x_0,y_0)\colon Y^* \rightrightarrows X^*$ defined by
\begin{equation}\label{coderivative}
D^*_\epsilon F(x_0,y_0)(y^*):=\big\{x^*\in X^*\;\big|\; (x^*,-y^*)\in N_\epsilon((x_0,y_0);\gph\, F)\big\},\quad y^*\in Y^*.    
\end{equation}    
\end{definition}\vspace*{-0.1in}

For simplicity, in what follows we skip $\ve=0$ in the subdifferential \eqref{subgrad}, normal cone \eqref{normalcone}, and coderivative \eqref{coderivative}  notations.
\vspace*{-0.2in}

\section{Tools of Near Convexity in LCTV Spaces} \label{near convexity}\vspace*{-0.1in}

This section presents fundamental tools for analyzing {\em int-nearly convex} sets and functions in LCTV spaces. We mainly focus on their properties needed below.

\begin{proposition}\label{NCC} Let $\Omega$ be a subset of $X$. Then $\Omega$ is int-nearly convex if and only if $\overline{\Omega}$ is convex and $\emptyset\neq \mbox{\rm int}\,\overline{\Omega}\subset \Omega$.
\end{proposition}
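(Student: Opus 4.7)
The plan is to prove both directions by exploiting two classical facts about convex subsets of a locally convex Hausdorff TVS: (F1) the closure of a convex set is convex; (F2) if $C$ is convex with $\mbox{\rm int}\,C\neq\emptyset$, then $\mbox{\rm int}\,\overline{C}=\mbox{\rm int}\,C$ and $\overline{\mbox{\rm int}\,C}=\overline{C}$. These will be invoked without proof since they are standard.

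For the forward implication, assume $\Omega$ is int-nearly convex and pick a witness $C$ as in Definition~\ref{def34}(b), i.e., $C$ convex with $\mbox{\rm int}\,C\neq\emptyset$ and $C\subset\Omega\subset\overline{C}$. Taking closures in the chain gives $\overline{C}\subset\overline{\Omega}\subset\overline{\overline{C}}=\overline{C}$, so $\overline{\Omega}=\overline{C}$; hence $\overline{\Omega}$ is convex by (F1). Next, from (F2), $\mbox{\rm int}\,\overline{\Omega}=\mbox{\rm int}\,\overline{C}=\mbox{\rm int}\,C$, which is nonempty and contained in $C\subset\Omega$, yielding $\emptyset\neq\mbox{\rm int}\,\overline{\Omega}\subset\Omega$.

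For the reverse implication, assume $\overline{\Omega}$ is convex and $\emptyset\neq\mbox{\rm int}\,\overline{\Omega}\subset\Omega$. The natural witness is $C:=\mbox{\rm int}\,\overline{\Omega}$. This set is convex (the interior of a convex set is convex), nonempty by hypothesis, and clearly has nonempty interior (it equals itself). The inclusion $C\subset\Omega$ is exactly the hypothesis. Finally, applying (F2) to the convex set $\overline{\Omega}$, which has nonempty interior, gives $\overline{C}=\overline{\mbox{\rm int}\,\overline{\Omega}}=\overline{\Omega}$, and therefore $\Omega\subset\overline{\Omega}=\overline{C}$. This verifies $C\subset\Omega\subset\overline{C}$, proving int-near convexity.

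I do not expect any serious obstacle here: the result is essentially a bookkeeping exercise once the two topological facts about convex sets with nonempty interior are in hand. The only mild subtlety is making sure (F2) is applied to a convex set that genuinely has nonempty interior in each direction — in the forward direction this set is $C$, and in the reverse direction it is $\overline{\Omega}$, whose interior is nonempty by assumption. Both applications are legitimate, so the argument closes cleanly.
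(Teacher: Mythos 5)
Your proof is correct and follows essentially the same route as the paper: both directions use the identity $\overline{\Omega}=\overline{C}$ together with the standard facts $\mbox{\rm int}\,\overline{C}=\mbox{\rm int}\,C$ and $\overline{\mbox{\rm int}\,\overline{\Omega}}=\overline{\Omega}$ for convex sets with nonempty interior (the paper cites these as Theorem~2.13 of \cite{mordukhovich2022convex}), and the reverse direction chooses the same witness $C=\mbox{\rm int}\,\overline{\Omega}$. No issues.
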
\vspace*{-0.2in}
\begin{proof}
If $\Omega$ is int-nearly convex, then there exists a convex set $C\subset X$ with nonempty interior such that $C\subset \Omega\subset \overline{C}$. Hence $\overline{\Om}=\overline{C}$ and $\overline{\Om}$ is convex. Moreover, it follows that
\begin{equation*}
{\rm int}\, C\subset {\rm int}\, \Omega\subset {\rm int}\,\overline{\Om}= {\rm int}\,\overline{C}.
\end{equation*}
Applying  \cite[Theorem~2.13(ii)]{mordukhovich2022convex} gives us the equality 
${\rm int}\, C={\rm int}\, \overline{C}$, which yields
$$
{\rm int}\, \overline{\Om}={\rm int}\, C\subset C\subset \Om.
$$
Conversely, assuming that $\overline{\Omega}$ is convex and that $\emptyset\neq \mbox{\rm int}\,\overline{\Omega}\subset \Omega$ tells us that the set $C:={\rm int}\,\overline{\Om}$ is convex with nonempty interior satisfying $C\subset\Om$. Since the closure $\overline{\Om}$ is also convex, it follows from  \cite[Theorem~2.13(i)]{mordukhovich2022convex} that
$$
\overline{C}=\overline{{\rm int}\,\overline{\Om}}=\overline{\Om}\supset \Om,
$$
which verifies that $\Om$ is int-nearly convex and thus completes the proof.
\end{proof}

To proceed further, we begin with the following  simple lemma, which follows directly from the proof of Proposition \ref{NCC}.

\begin{lemma}\label{lemmaint}
Let $\Om$ be an int-nearly convex set with $C\subset \Om\subset\overline{C}$, where $C$ is a convex set with nonempty interior. Then we have the equalities
\begin{equation}\label{equ1}
{\rm int}\,C={\rm int}\,\Om={\rm int}\,\overline{\Om}={\rm int}\,\overline{C}.
\end{equation}
\end{lemma}

The next proposition shows that the intersection of int-nearly convex sets is int-nearly convex under a suitable interior condition.

\begin{proposition}\label{lemmaintersectionnearlyconvex}
Let $ \Omega_1, \Omega_2 \subset X $ be int-nearly convex sets. If $({\rm int} \,\Omega_1) \cap ({\rm int} \,\Omega_2) \neq \emptyset$, then the intersection $\Omega_1 \cap \Omega_2$ is int-nearly convex. Furthermore,
\begin{equation}\label{intrule}
    \mbox{\rm int}(\Omega_1\cap \Omega_2)=(\mbox{\rm int}\,\Omega_1)\cap (\mbox{\rm int}\,\Omega_2).
\end{equation}
\end{proposition}\vspace*{-0.2in}
\begin{proof}
Since $\Omega_1$ and $\Omega_2$ are int-nearly convex, there exist convex sets $C_1$ and $C_2$ with ${\rm int}\, C_1 \neq \emptyset$ and ${\rm int}\, C_2 \neq \emptyset$ such that
\[
C_1 \subset \Omega_1 \subset \overline{C_1} \; \text{and} \; C_2 \subset \Omega_2 \subset \overline{C_2}.
\]
Setting $C: = ({\rm int}\, C_1) \cap ({\rm int}\, C_2)$, we get \( C \subset \Omega_1 \cap \Omega_2 \). It follows from Lemma~\ref{lemmaint} that
\[{\rm int}\, C_1 = {\rm int}\, \Omega_1 = {\rm int}\, \overline{C_1} \; \text{ and } \; {\rm int}\, C_2 = {\rm int}\, \Omega_2 = {\rm int}\, \overline{C_2}.\]
Since $({\rm int}\, \Omega_1) \cap ({\rm int}\, \Omega_2) \neq \emptyset$,  we see that \( C \) is a nonempty open convex set. Then
$$
\overline{C} = \overline{({\rm int}\, C_1) \cap ({\rm int}\, C_2)} = \overline{{\rm int}\,C_1} \cap \overline{{\rm int}\,C_2}   = \overline{C_1} \cap \overline{C_2}\supset \Omega_1\cap \Omega_2
$$
due to \cite[Theorems~2.13(a) and 2.14]{mordukhovich2022convex}. Therefore, the set  \( \Omega_1 \cap \Omega_2 \) is int-nearly convex. The proof of \eqref{intrule} is also straightforward. 
\end{proof}\vspace*{-0.1in}

Now we show that the class of int-nearly convex sets is preserved under applying surjective continuous open linear mappings.

\begin{proposition}\label{lemma11}
Let $ \Omega \subset X $ be an int-nearly convex set, and let $T \colon X \to Y $ be a surjective continuous linear operator. Assume that $T$ is an open mapping $($which holds, in particular, if  both $ X $ and $ Y $ are Fr\'echet spaces$)$.  Then the image set $ T(\Omega) $ is int-nearly convex with
\begin{equation*}
{\rm int}\,T(\Omega)=T({\rm int}\,\Om).
\end{equation*}
\end{proposition}\vspace*{-0.2in}
\begin{proof} Since $\Omega$ is int-nearly convex, there exists a convex set $C\subset X$ with ${\rm int}\,C\neq \emptyset$ such that
$C\subset \Omega\subset \overline{C}$. These inclusions imply that
 \begin{equation}\label{near11}
T(C)\subset T(\Om)\subset T(\overline{C})\subset \overline{T(C)},
\end{equation}
where $T(\overline{C}) \subset \overline{T(C)}$ follows from the continuity of $T$. It remains to check that ${\rm int}\,T(C)\neq \emptyset$. Since $T$ is an open mapping, the set $T({\rm int}\,C)$ is open in $Y$. Consequently, we get
\begin{equation}\label{near33}
\emptyset\neq T({\rm int}\,C)\subset{\rm int}\,T(C).
\end{equation}
The convexity of $T(C)$ with ${\rm int}\,T(C)\ne\emp$ tells us  that  $T(\Om)$ is int-nearly convex. Observe that the reverse inclusion in \eqref{near33} always holds true (prove this!). Then Lemma~\ref{lemmaint} yields
\begin{equation*}
T(\mbox{\rm int}\,\Omega)=T(\mbox{\rm int}\,C)=\mbox{\rm int}\, T(C)=\mbox{\rm int}\,\overline{T(C)}=\mbox{\rm int}\,\overline{T(\Omega)}=\mbox{\rm int}\,T(\Omega),
\end{equation*}
which thus completes the proof of the proposition.
\end{proof}\vspace*{-0.1in}

Define the {\em projection mappings} $\mathcal{P}_1\colon X\times Y\to X$ and $\mathcal{P}_{1, 3}\colon X\times Y\times \R\to X\times \R$ by
\begin{equation}\label{proj1}
\mathcal{P}_1(x, y):=x\;\mbox{ and }\;\mathcal{P}_{1, 3}(x, y, \lambda):=(x, \lambda)
\end{equation}
for $(x, y)\in X\times Y$ and  $(x,y,\lm)\in X\times Y\times \R$, respectively.  Given a function $\phi\colon X\times Y\to \oR$ and a set-valued mapping $F\colon X\tto Y$, form the sets
\begin{equation}\label{om12}
\Omega_s:=\epi_s\, \phi,\ \Omega:=\epi\, \phi, \; \mbox{\rm and }\; \Theta:=(\gph\, F)\times \R.
\end{equation}
A useful consequence of Proposition~\ref{lemma11} involving projection mappings is stated below.

\begin{corollary}\label{cor11}
Assume that $\phi$ is an int-nearly convex function and that $F$ is an int-nearly convex 
set-valued mapping such that 
\begin{equation}\label{QC1}
(\mbox{\rm int}\,\Om)\cap (\mbox{\rm int}\,\Theta)\neq\emptyset,\end{equation}
where $\Om$ and $\Theta$ are defined in \eqref{om12}. Then the set $\mathcal{P}_{1,3}(\Om\cap \Theta)$ is int-nearly convex with
\begin{equation*}
{\rm int}\,\mathcal{P}_{1,3}(\Omega\cap \Theta)=\mathcal{P}_{1,3}\big(({\rm int}\,\Om)\cap 
({\rm int}\,\Theta)\big).
\end{equation*}
\end{corollary}\vspace*{-0.1in}
\begin{proof}
By Proposition~\ref{lemmaintersectionnearlyconvex}, the set $\Om\cap \Theta$ is int-nearly convex and \eqref{intrule} holds. Then the conclusion follows directly from Proposition \ref{lemma11} and the fact that $\mathcal{P}_{1,3}$ is a surjective  continuous open linear mapping.
\end{proof}\vspace*{-0.1in}

We conclude this section by presenting several  properties of int-nearly convex functions.

\begin{proposition}\label{pronearlyconvexlip} Let $f\colon X\to \oR$ be a proper int-nearly convex function. Then the following assertions are satisfied:

{\bf(i)} $\Bar{f}$ is a convex function.

{\bf(ii)} $\dom\,f$ is int-nearly convex set such that
\begin{equation}\label{int epi}
\mbox{\rm int}(\epi\, f)=\big\{(x, \lambda)\;\big|\; x\in \mbox{\rm int}(\dom\, f), \; f(x)<\lambda\big\}.
\end{equation}

{\bf(iii)} $\Bar f (x)=f(x)\;\text{ for all } x\in\mbox{\rm int}(\dom\, f).$  

{\bf(iv)} If $X$ is a normed spaces, then $f$ is  locally Lipschitz continuous on ${\rm int}({\rm dom}\,f)$.
\end{proposition}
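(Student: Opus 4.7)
The overall strategy is to reduce everything to the convex analysis of the closed hull $\bar{f}$, which becomes a well-behaved convex object under the int-near convexity assumption. Part (a) is immediate from Remark \ref{remark1}, since int-near convexity is a special case of near convexity, and $\epi\,\bar{f}=\overline{\epi\,f}$ is convex.

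For (b), the int-near convexity of $\dom\,f$ follows by applying Proposition \ref{lemma11} to the continuous, open, surjective projection $\mathcal{P}_1\colon X\times \R\to X$, using the identity $\mathcal{P}_1(\epi\,f)=\dom\,f$. For the interior characterization of $\epi\,f$, Lemma \ref{lemmaint} yields the key identity $\mbox{\rm int}\,(\epi\,f)=\mbox{\rm int}\,(\overline{\epi\,f})=\mbox{\rm int}\,(\epi\,\bar{f})$. Since $\bar{f}$ is convex by (a) and its epigraph has nonempty interior, the classical description of the interior of a convex epigraph gives $\mbox{\rm int}\,(\epi\,\bar{f})=\{(x,\lambda)\mid x\in\mbox{\rm int}\,(\dom\,\bar{f}),\,\lambda>\bar{f}(x)\}$.

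To translate this formula back in terms of $f$ itself --- which simultaneously establishes (c) --- the required identifications are $\mbox{\rm int}\,(\dom\,f)=\mbox{\rm int}\,(\dom\,\bar{f})$ and $\bar{f}=f$ on $\mbox{\rm int}\,(\dom\,f)$. The inclusion $\subset$ in the first identity is immediate from $\bar{f}\leq f$, and the reverse follows from $\dom\,\bar{f}\subset\overline{\dom\,f}$ (routine from the definition of $\bar{f}$) combined with Lemma \ref{lemmaint} applied to the int-nearly convex set $\dom\,f$. The value identity I would prove by contradiction: if $\bar{f}(x_0)<f(x_0)$ at some $x_0\in\mbox{\rm int}\,(\dom\,f)$, then any $\alpha$ strictly between them satisfies $(x_0,\alpha)\in\mbox{\rm int}\,(\epi\,\bar{f})=\mbox{\rm int}\,(\epi\,f)\subset\epi\,f$, forcing the contradiction $f(x_0)\leq\alpha$. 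I expect the main obstacle to be the technical verification that $\bar{f}$ is proper, so that the classical convex-analytic characterization of $\mbox{\rm int}\,(\epi\,\bar{f})$ can be invoked; this should follow from the properness of $f$ together with the upper boundedness of $\bar{f}$ on the nonempty open set provided by $\mbox{\rm int}\,(\epi\,f)$.

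Finally, for (d), I would invoke the standard fact that a proper, lower semicontinuous, convex function on a locally convex Hausdorff space whose epigraph has nonempty interior is locally Lipschitz, with respect to the generating seminorms, on the interior of its domain. Applying this to $\bar{f}$ and using (c) together with the openness of $\mbox{\rm int}\,(\dom\,f)$ then transfers the local Lipschitz property from $\bar{f}$ to $f$, completing the proof.
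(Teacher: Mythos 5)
Your proposal is correct and rests on the same core machinery as the paper: Remark \ref{remark1} for (a), Proposition \ref{lemma11} applied to the projection $\mathcal{P}_1$ for the int-near convexity of $\dom f$, Lemma \ref{lemmaint} for the identity $\mbox{\rm int}(\epi\,f)=\mbox{\rm int}(\epi\,\Bar f)$, a contradiction argument for $\Bar f=f$ on $\mbox{\rm int}(\dom f)$, and the standard local Lipschitz theorem for proper convex functions in (d). The one structural difference lies in how formula \eqref{int epi} is obtained and in the order of (b) and (c): the paper cites an external corollary giving the interior of the epigraph of an int-nearly convex function directly in terms of $f$, and then uses \eqref{int epi} to produce the contradiction in (c); you instead invoke the classical description of $\mbox{\rm int}(\epi\,\Bar f)$ for the convex closure, prove (c) first (deriving the contradiction from $\mbox{\rm int}(\epi\,\Bar f)=\mbox{\rm int}(\epi\,f)\subset\epi\,f$ rather than from \eqref{int epi}), and only then transfer the formula back to $f$ via $\mbox{\rm int}(\dom f)=\mbox{\rm int}(\dom\Bar f)$. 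Your route is self-contained where the paper outsources a step, at the cost of reversing the dependency between (b) and (c); both are non-circular. The one point to tighten is the properness of $\Bar f$: upper boundedness of $\Bar f$ on a nonempty open set does not by itself exclude the value $-\infty$. The clean fix is to note that $\Bar f(x_0)=-\infty$ would place an entire vertical line $\{x_0\}\times\R$ in $\overline{\epi\,f}$, and the line segment principle applied with any point of $\mbox{\rm int}(\epi\,f)$ would then force points $(t x_1+(1-t)x_0,\,t\lambda_1+(1-t)\lambda)$ with arbitrarily negative last coordinate into $\mbox{\rm int}(\epi\,\Bar f)=\mbox{\rm int}(\epi\,f)\subset\epi\,f$, contradicting the properness of $f$; the paper handles this by citing \cite[Proposition 2.2.5]{zalinescu2002convex}.
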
\vspace*{-0.15in}
\begin{proof} (i) It is straightforward to deduce this assertion from Remark~\ref{remark1}. 

(ii) Since $\mathcal{P}_1(\epi\,f)=\dom\,f$, it follows from Proposition~\ref{lemma11} that the set $\dom\,f$ is int-nearly convex. Then we can use \cite[Corollary~6.4]{LMN} to obtain the representation in \eqref{int epi}.

(iii) Lemma~\ref{lemmaint} yields the conditions  
\begin{equation*}\label{gleqmu}
\emptyset\neq {\rm int}(\epi\, f)={\rm int}(\overline{\epi\,f})= {\rm int}(\epi\, \Bar f),
\end{equation*}
and thus it follows from Proposition~\ref{lemma11} that
\begin{equation*}
\mbox{\rm int}(\dom\, f)=\mathcal{P}_1({\rm int}(\epi\,f))=\mathcal{P}_1({\rm int}(\epi\,\Bar f))={\rm int}(\dom\, \Bar f).
\end{equation*}
Pick any $x\in \mbox{\rm int}(\dom\,f)$. Since we always have $\Bar{f}(x)\leq f(x)$, suppose  on the contrary that $\Bar f (x)<f(x)$. Then there exists $\lambda\in \R$ such that $\Bar f (x)<\lambda<f(x)$. By \cite[Proposition~2.2.5]{zalinescu2002convex}, the function $\Bar f$ is proper, and hence we apply \cite[Corollary~2.145]{mordukhovich2022convex} to get
$$
(x,\lambda)\in {\rm int}(\epi\, \Bar f)= {\rm int}(\epi\, f),
$$
which implies by part (ii) that $f(x)<\lambda$, a contradiction justifying (iii). 

(iv) This part follows directly from part (ii) and \cite[Theorem~2.144]{mordukhovich2022convex}.
\end{proof}\vspace*{-0.3in}

\section{Qualitative Properties of Optimal Value Functions}\label{sectionproperties}\vspace*{-0.1in}

In this section, we study several qualitative properties of the optimal value function \eqref{optimalfunction} including semicontinuity, Lipschitzian behavior, and near convexity that are fundamental in variational analysis and optimization. In some cases, the near convexity assumptions play an essential role. We begin with the following description of the domain of \eqref{optimalfunction}.

\begin{proposition}\label{projectdom} Consider the optimal value function \eqref{optimalfunction} and the first projection mapping in \eqref{proj1}. Then we always have the representation
\begin{equation}\label{dom1}
\dom\, \mu=\mathcal{P}_1(\dom\, \phi\cap \gph\,F).
\end{equation}
Consequently, $\mu$ is proper if and only if $\phi(x, \cdot)$ is bounded from below on $F(x)$ for every $x\in X$ and   $\dom\, \phi\cap \gph\,F\neq\emptyset$.
\end{proposition}\vspace*{-0.2in}
\begin{proof} 
Pick $x\in \dom\,\mu$ and find $y\in F(x)$ with $\phi(x, y)<\infty$. Thus $(x, y)\in \dom\,\phi\cap \gph\, F$, which implies that 
$x\in \mathcal{P}_1(\dom\, \phi\cap \gph\, F)$ and readily justifies the inclusion
$$
\dom\, \mu\subset \mathcal{P}_1(\dom\, \phi\cap \gph\,F).
$$ 
The proofs of the reverse inclusion and the subsequent statement are straightforward.    
\end{proof}\vspace*{-0.1in}

Next we obtain relationships between the strict epigraph and epigraph of the optimal value function~$\mu$ and projections of the sets defined in \eqref{om12}.

\begin{proposition}\label{epirep} Consider the optimal value function \eqref{optimalfunction}, the  second projection mapping in \eqref{proj1}, and the sets $\Omega_s$, $\Om$, and $\Theta$ from \eqref{om12}.
Then the following hold:

{\bf(i)} $\epi_s\,\mu=\mathcal{P}_{1,3}(\Omega_s\cap \Theta)$.

{\bf(ii)} $\epi_s\, \mu\subset\mathcal{P}_{1,3}(\Omega\cap \Theta)\subset \epi\,\mu.$
\end{proposition}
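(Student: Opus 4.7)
The plan is to prove (a) directly from the definitions of epigraph, strict epigraph, graph, and infimum, and then deduce (b) as a short corollary using $\Omega_s\subset\Omega$.

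For (a), I would argue by double inclusion. For $\epi_s\,\mu\subset\mathcal{P}_{1,3}(\Omega_s\cap\Theta)$, take $(x,\lambda)\in\epi_s\,\mu$, i.e., $\mu(x)=\inf\{\phi(x,y)\mid y\in F(x)\}<\lambda$. By the definition of infimum there exists $y\in F(x)$ with $\phi(x,y)<\lambda$, so $(x,y,\lambda)\in\epi_s\,\phi=\Omega_s$ and $(x,y,\lambda)\in(\gph\,F)\times\R=\Theta$; applying $\mathcal{P}_{1,3}$ gives $(x,\lambda)$ as required. For the reverse inclusion $\mathcal{P}_{1,3}(\Omega_s\cap\Theta)\subset\epi_s\,\mu$, if $(x,\lambda)=\mathcal{P}_{1,3}(x,y,\lambda)$ with $(x,y,\lambda)\in\Omega_s\cap\Theta$, then $\phi(x,y)<\lambda$ and $y\in F(x)$, whence $\mu(x)\le\phi(x,y)<\lambda$, i.e., $(x,\lambda)\in\epi_s\,\mu$.

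For (b), the first inclusion is immediate from (a) combined with the trivial set inclusion $\Omega_s\subset\Omega$, which yields $\mathcal{P}_{1,3}(\Omega_s\cap\Theta)\subset\mathcal{P}_{1,3}(\Omega\cap\Theta)$. The second inclusion $\mathcal{P}_{1,3}(\Omega\cap\Theta)\subset\epi\,\mu$ follows by the same argument as the reverse direction in (a), with the strict inequality $\phi(x,y)<\lambda$ replaced by the weak inequality $\phi(x,y)\le\lambda$: one obtains $\mu(x)\le\phi(x,y)\le\lambda$, so $(x,\lambda)\in\epi\,\mu$.

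There is no substantive obstacle; this is a bookkeeping argument. The only point worth checking is the edge case $F(x)=\emptyset$, which gives $\mu(x)=\inf\emptyset=+\infty$ (so such $x$ contributes nothing to $\epi_s\,\mu$ or $\epi\,\mu$) and simultaneously forces $(x,y,\lambda)\notin\Theta$ for every $y,\lambda$, keeping both sides of the equalities/inclusions consistent.
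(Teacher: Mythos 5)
Your proposal is correct and follows essentially the same double-inclusion argument as the paper: part (a) via the definition of the infimum in one direction and $\mu(x)\le\phi(x,y)$ in the other, and part (b) from $\Omega_s\subset\Omega$ together with the same weak-inequality argument. The extra remark on the edge case $F(x)=\emptyset$ is a harmless addition that the paper leaves implicit.
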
\vspace*{-0.2in}
\begin{proof} (i) Taking any $(x, \lambda)\in \epi_s\,\mu$, we have 
\begin{equation*}
 (x, \lambda)\in  X\times \R\; \mbox{\rm and }   \mu(x)<\lambda.
\end{equation*}
By definition \eqref{optimalfunction} of $\mu$, there exist $(x, y)\in \gph\, F$ such that $\phi(x, y)<\lambda$, which implies that $(x, y, \lambda)\in \epi_s\, \phi$. This yields $(x, \lambda)\in \mathcal{P}_{1, 3}(\Omega_s\cap \Theta)$ and hence
\begin{equation*}
\epi_s\, \mu\subset \mathcal{P}_{1, 3}(\Omega_s\cap \Theta).
\end{equation*}
To verify the reverse inclusion, pick any $(x, \lambda)\in \mathcal{P}_{1, 3}(\Omega_s\cap \Theta)$ and find $y\in Y$ such that $(x, y, \lambda)\in \epi_s\, \phi$ and $(x, y)\in \gph\, F$, which gives us $\phi(x, y)<\lambda$. Since $y\in F(x)$, we get $\mu(x)\leq \phi(x, y)<\lambda$ and thus $(x, \lambda)\in \epi_s\, \mu$, which completes the proof of (i). 

(ii) Since $\Om_s\subset \Om$, it follows from part (i) that 
$$
\epi_s\,\mu=\mathcal{P}_{1,3}(\Omega_s\cap \Theta)\subset \mathcal{P}_{1,3}(\Omega\cap \Theta).
$$
To justify the second inclusion in (ii), we can argue similarly to the verification of the reverse inclusion in (i). Thus the proof is complete.
\end{proof}\vspace*{-0.1in}

To proceed further with the study of the convexity and int-near convexity of the optimal value function, we need the following lemma.

\begin{lemma}\label{lem0}
Let $f\colon X\to \oR$ be an extended-real-valued function. Then 
$\overline{\epi_s\,f}=\overline{\epi\,f}$.
\end{lemma}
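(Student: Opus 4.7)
The plan is to prove the two inclusions separately, and both are quite direct. The inclusion $\overline{\epi_s\,f}\subset\overline{\epi\,f}$ is immediate from the trivial set inclusion $\epi_s\,f\subset\epi\,f$ together with monotonicity of the closure operator.

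For the reverse inclusion $\overline{\epi\,f}\subset\overline{\epi_s\,f}$, it is enough to show $\epi\,f\subset\overline{\epi_s\,f}$ and then take closures. To this end, I would fix an arbitrary point $(x,\lambda)\in\epi\,f$, so that $f(x)\leq\lambda$, and exhibit an explicit sequence in $\epi_s\,f$ converging to it in $X\times\R$. The natural choice is the sequence $(x,\lambda+1/n)_{n\in\N}$: since $f(x)\leq\lambda<\lambda+1/n$ for every $n$, each term lies in $\epi_s\,f$, and the sequence clearly converges to $(x,\lambda)$ in the product topology of $X\times\R$. This places $(x,\lambda)$ in $\overline{\epi_s\,f}$.

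I do not foresee any real obstacle here. The only edge cases worth a brief mention are when $f(x)=-\infty$ (in which case $(x,\lambda)\in\epi_s\,f$ already, so it is automatically in $\overline{\epi_s\,f}$) and the case where one might worry about nets versus sequences in a general locally convex Hausdorff TVS; but since the sequence $(x,\lambda+1/n)$ does genuinely converge in the product topology, its limit belongs to the closure, and no appeal to nets is required. Combining the two inclusions yields the claimed equality.
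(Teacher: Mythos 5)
Your proof is correct and follows essentially the same route as the paper: both establish the easy inclusion from $\epi_s\,f\subset\epi\,f$ and then show $\epi\,f\subset\overline{\epi_s\,f}$ using the sequence $(x,\lambda+1/n)\to(x,\lambda)$ with each term in the strict epigraph. Your brief remarks on the $f(x)=-\infty$ case and on sequences sufficing here are sound but not needed beyond what the paper records.
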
\vspace*{-0.2in}
\begin{proof}
Since $\epi_s\,f\subset \epi\,f$, it immediately follows that $\overline{\epi_s\,f}\subset\overline{\epi\,f}$. To verify the reverse inclusion, pick any $(x, \lambda)\in \epi\,f$ and observe that $(x, \lambda+\frac{1}{k})\to (x, \lambda)$ as $k\to \infty$, where $(x, \lambda+\frac{1}{k})\in \epi_s\,f$ for all $k\in \N$. This implies that $(x, \lambda)\in \overline{\epi_s\,f}$. Therefore, we have $\epi\,f\subset \overline{\epi_s\,f}$ and hence $\overline{\epi\,f}\subset \overline{\epi_s\,f}$, which completes the proof.
\end{proof}\vspace*{-0.1in}

Now we are ready to establish two important structural properties of the optimal value function $\mu$ from \eqref{optimalfunction} depending on the convexity/int-near convexity assumptions imposed on the objective function and the constraint mapping. 

\begin{theorem}\label{thmmuconvex}
We have the following properties for the optimal valued function $\mu$:

{\bf(i)} If $\phi$ is a convex function and $F$ is a convex set-valued mapping, then $\mu$ is convex.

{\bf(ii)} If $\phi$ is an int-nearly convex function and $F$ is an int-nearly convex set-valued mapping satisfying condition \eqref{QC1}, then $\mu$ is int-nearly convex.      
\end{theorem}\vspace*{-0.2in}
\begin{proof}
(i) If $\phi$ is a convex function and $F$ is a convex set-valued mapping, then $\epi_s\,\phi$ and $\gph\, F$ are convex sets. Using Proposition~\ref{epirep}(i), we have that the set $\epi_s\,\mu=\mathcal{P}_{1,3}(\Omega_s\cap \Theta)$ is convex because the intersection $\Omega_s\cap \Theta$ is a convex set and $\mathcal{P}_{1,3}$ is a linear mapping. Therefore, $\mu$ is a convex function.

(ii) Proposition~\ref{epirep}(ii) gives us the inclusions
\begin{equation*}
\overline{\epi_s\, \mu}\subset\overline{\mathcal{P}_{1,3}(\Omega\cap \Theta)}\subset \overline{\epi\,\mu}.
\end{equation*}
It follows from Lemma~\ref{lem0} that $\overline{\epi_s\, \mu}=\overline{\epi\,\mu}$ and hence 
\begin{equation}\label{near18}
\overline{\epi\,\mu}=\overline{\mathcal{P}_{1,3}(\Omega\cap \Theta)}.
\end{equation}
Since both  sets $\Om$ and $\Theta$ are int-nearly convex and condition \eqref{QC1} holds, we deduce from Corollary~\ref{cor11} that $\mathcal{P}_{1,3}(\Omega\cap \Theta)$ is a int-nearly convex set. Therefore, Lemma~\ref{lemmaint} tells us that
\begin{equation}\label{near111}
\mbox{\rm int}\,\overline{\mathcal{P}_{1,3}(\Omega\cap \Theta)}=\mbox{\rm int}\,\mathcal{P}_{1,3}(\Omega\cap \Theta)\neq \emptyset.
\end{equation}
By Remark~\ref{remark1}, the set $\overline{\mathcal{P}_{1,3}(\Omega\cap \Theta)}$ is convex, and so is $\overline{\epi\,\mu}$.
Combining \eqref{near18} with \eqref{near111} and Proposition~\ref{epirep}(ii), we obtain the relationships
\begin{equation*}
\emptyset\neq\mbox{\rm int}(\overline{\epi\,\mu})=\mbox{\rm int}\,\overline{\mathcal{P}_{1,3}(\Omega\cap \Theta)}=\mbox{\rm int}\,\mathcal{P}_{1,3}(\Omega\cap \Theta)\subset \mbox{\rm int}(\epi\,\mu)\subset \mbox{\rm epi}\,\mu,
\end{equation*}  
which allow us to deduce from Proposition~\ref{NCC} that $\mu$ is int-nearly convex. 
\end{proof}\vspace*{-0.1in}

Our next goal is to study the semicontinuity properties of the optimal value function $\mu$ in general LCTV spaces.  We begin by recalling the definitions of lower semicontinuity and upper semicontinuity for set-valued mappings. 

\begin{definition}\label{semi}
Let $F \colon X \rightrightarrows Y$ be a set-valued mapping, and let $x_0 \in \dom\, F$.

{\bf(i)} $F$ is \emph{lower semicontinuous} at $x_0$ if for any open set $V$ in $Y$ satisfying $F(x_0)\cap V~\neq \emptyset$, there exists a neighborhood $U$ of $x_0$ in $X$ such that $F(x)\cap V\neq \emptyset$ whenever $x\in U$.

{\bf(ii)} $F$ is \emph{upper semicontinuous} at $x_0$ for any open set $V\supset F(x_0)$, there exists a neighborhood $U$ of
$x_0$ such that $F(U ) \subset V.$ 
\end{definition}\vspace*{-0.1in}

First we provide sufficient conditions for the upper semicontinuity of $\mu$ from \eqref{optimalfunction}.

\begin{proposition}\label{upper} Let $x_0\in \dom_{\R}\,\mu$, where $\dom_{\R}\,\mu$ is defined by \eqref{intrinsicdomain}. Suppose that the constraint mapping $F$ is lower semicontinuous at $x_0$ and that the cost function $\phi$ is upper semicontinuous at every  point $(x_0,y)$ with $y\in F(x_0)$. Then the optimal value function $\mu$ is upper semicontinuous at $x_0$. 
\end{proposition}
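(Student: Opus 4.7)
The plan is to establish upper semicontinuity of $\mu$ at $x_0$ directly from the definition: for any $\lambda \in \R$ with $\lambda > \mu(x_0)$, I need to produce a neighborhood $U$ of $x_0$ on which $\mu(x) < \lambda$. Since $x_0 \in \dom_\R \mu$, the value $\mu(x_0)$ is finite, so such a $\lambda$ can always be approached from above via the infimum.

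First I would exploit the infimum in the definition of $\mu(x_0) = \inf\{\phi(x_0,y) \mid y \in F(x_0)\}$ to select a witness $y_0 \in F(x_0)$ satisfying $\phi(x_0, y_0) < \lambda$. Next I would invoke the upper semicontinuity of $\phi$ at the point $(x_0, y_0)$: this yields a product-form neighborhood $U_1 \times V$ of $(x_0, y_0)$ in $X \times Y$ on which $\phi(x,y) < \lambda$ for every $(x,y) \in U_1 \times V$. Here I use the fact that the product topology on $X \times Y$ admits a neighborhood base of rectangles, so shrinking $W$ to a product neighborhood is routine.

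The key link is then provided by the lower semicontinuity of $F$ at $x_0$. Since $y_0 \in F(x_0) \cap V$, the open set $V$ meets $F(x_0)$, so there is a neighborhood $U_2$ of $x_0$ such that $F(x) \cap V \neq \emptyset$ for every $x \in U_2$. Setting $U := U_1 \cap U_2$, for any $x \in U$ I pick some $y_x \in F(x) \cap V$; then $(x, y_x) \in U_1 \times V$ gives $\phi(x, y_x) < \lambda$, and consequently $\mu(x) \le \phi(x, y_x) < \lambda$, as required.

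I expect no substantive obstacle here: the argument is a standard interplay between lower semicontinuity of the feasible set mapping and upper semicontinuity of the objective. The only mildly delicate point is the selection of a product neighborhood witnessing the upper semicontinuity of $\phi$, which is immediate from the definition of the product topology on $X \times Y$. Finiteness of $\mu(x_0)$ (guaranteed by $x_0 \in \dom_\R \mu$) is what permits the initial choice of $\lambda > \mu(x_0)$ together with the approximating $y_0$; the argument does not require $\mu(x_0)$ to be attained.
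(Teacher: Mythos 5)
Your argument is correct and coincides with the paper's own proof: both select an approximate minimizer $y_0\in F(x_0)$ with $\phi(x_0,y_0)<\lambda$, pass to a product neighborhood via the upper semicontinuity of $\phi$, and then use the lower semicontinuity of $F$ applied to the open set $V$ to propagate the bound $\mu(x)<\lambda$ to a neighborhood of $x_0$. No gaps; the handling of finiteness and of the product neighborhood matches the paper.
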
\vspace*{-0.2in}

\begin{proof} Take any $\lambda\in \R$ such that $\mu(x_0)<\lambda$. Then there exists $y_0\in F(x_0)$ with $\phi(x_0, y_0)<\lambda$. The upper semicontinuity of $\phi$ gives us neighborhoods $U$ of $x_0$ and $V$ of $y_0$ such that $\phi(x, y)<\lambda$ whenever $(x, y)\in U\times V$. Since $F(x_0)\cap V\neq\emptyset$, under the imposed lower semicontinuity of $F$ there exists a neighborhood $U_1$ of $x_0$ for which
$$
F(x)\cap V\neq\emptyset\; \mbox{\rm whenever }x\in U_1.
$$
Setting $W:=U\cap U_1$ and then picking any $x\in W$ and $y\in F(x)\cap V$, we get
\begin{equation*}
\mu(x)\leq \phi(x, y)<\lambda.
\end{equation*}
which therefore verifies the claimed upper semicontinuity of $\mu$.
\end{proof}\vspace*{-0.1in}

Sufficient conditions guaranteeing the lower semicontinuity of the optimal value function are presented in the proposition below.

\begin{proposition}\label{prolowerseimicontinuity} Given $x_0\in \dom_{\R}\,\mu$, suppose that the set $F(x_0)$ is compact. If the mapping $F$ is upper semicontinuous at $x_0$ and the cost function $\phi$ is lower semicontinuous at $(x_0, y)$ for all $y\in F(x_0)$, then $\mu$ is lower semicontinuous at $x_0$. 
\end{proposition}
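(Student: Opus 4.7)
The plan is to show lower semicontinuity of $\mu$ at $x_0$ directly from the definition: for every $\lambda \in \mathbb{R}$ with $\lambda < \mu(x_0)$, I will construct a neighborhood $U$ of $x_0$ on which $\mu(x) > \lambda$. Since $x_0 \in \dom_\mathbb{R}\,\mu$, the value $\mu(x_0)$ is finite, so I can pick an intermediate $\lambda' \in \mathbb{R}$ with $\lambda < \lambda' < \mu(x_0)$; the strict inequality $\lambda'<\mu(x_0)$ implies $\phi(x_0,y)>\lambda'$ for every $y\in F(x_0)$.

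Next, I would exploit the lower semicontinuity of $\phi$ pointwise on $\{x_0\}\times F(x_0)$. For each $y\in F(x_0)$, there exist a neighborhood $U_y$ of $x_0$ and a neighborhood $V_y$ of $y$ such that $\phi(x,y')>\lambda'$ whenever $(x,y')\in U_y\times V_y$. Compactness of $F(x_0)$ now intervenes: the open cover $\{V_y\}_{y\in F(x_0)}$ of $F(x_0)$ admits a finite subcover $V_{y_1},\ldots,V_{y_n}$. Set $V:=\bigcup_{i=1}^n V_{y_i}$, which is open and contains $F(x_0)$, and $U':=\bigcap_{i=1}^n U_{y_i}$, which is a neighborhood of $x_0$. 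By construction, $\phi(x,y')>\lambda'$ for every $(x,y')\in U'\times V$.

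Finally, I would invoke upper semicontinuity of $F$ at $x_0$ applied to the open set $V\supset F(x_0)$: there exists a neighborhood $U''$ of $x_0$ with $F(x)\subset V$ for every $x\in U''$. Setting $U:=U'\cap U''$, I consider any $x\in U$. If $F(x)=\emptyset$, then $\mu(x)=\infty>\lambda$. Otherwise, for every $y'\in F(x)\subset V$ and $x\in U'$, we get $\phi(x,y')>\lambda'$, whence $\mu(x)=\inf\{\phi(x,y'):y'\in F(x)\}\geq \lambda'>\lambda$. This yields the required lower semicontinuity.

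The routine part is the pointwise lower semicontinuity argument; the crucial step, where the hypotheses genuinely interact, is extracting a finite subcover from $\{V_y\}$ using compactness of $F(x_0)$ and then matching it with upper semicontinuity of $F$ to force all of $F(x)$ into $V$. Without compactness of $F(x_0)$, one could not pass from pointwise estimates on $\phi$ to a uniform estimate over $V$; without upper semicontinuity of $F$, one could not guarantee $F(x)\subset V$ for $x$ near $x_0$. I expect no real obstacle beyond carefully managing the case $F(x)=\emptyset$, which is handled by the convention $\inf\emptyset=\infty$ stated in Section~\ref{sectionpre}.
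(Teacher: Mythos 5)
Your proof is correct and follows essentially the same route as the paper's: pick an intermediate value $\lambda'$ (the paper's $\gamma$), use pointwise lower semicontinuity of $\phi$ plus compactness of $F(x_0)$ to get a finite subcover, and then use upper semicontinuity of $F$ to confine $F(x)$ to that cover for $x$ near $x_0$. Your explicit handling of the case $F(x)=\emptyset$ via $\inf\emptyset=\infty$ is a small point the paper leaves implicit, but otherwise the arguments coincide.
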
\vspace*{-0.2in}
\begin{proof}
Pick $\lambda \in \R$ with $\lambda<\mu(x_0)$ and find $\gamma\in \R$ such that $\lambda<\gamma<\mu(x_0)$. Then 
\begin{equation*}
\gamma<\mu(x_0)\leq\phi(x_0,y)\; \text{ for all }\;y\in F(x_0).
\end{equation*}
The lower semicontinuity of $\phi$ at $(x_0, y)$ for any fixed $y\in F(x_0)$ yields the existence of (open) neighborhoods $U_y$ of $x_0$ and $V_y$ of $y$ satisfying
\begin{equation}\label{ct3}
\gamma<\phi(x, z)\; \mbox{\rm whenever }\;(x, z)\in U_y\times V_y.
\end{equation}
Since $F(x_0)$ is compact, the open cover $\{V_y\}_{y\in F(x_0)}$ above contains a finite subcover
$$
F(x_0)\subset \bigcup_{i=1}^p V_{y_i}.
$$
The upper semicontinuity $F$ at $x_0\in \dom\, F$ gives us a neighborhood $U_{x_0}$ of $x_0$ with
\begin{equation}\label{ct4}
F(x) \subset \bigcup_{i=1}^p V_{y_i}\; \text{ for all }\;x \in U_{x_0}.
\end{equation}
Define further the open set
\begin{equation*}
U=\bigcap_{i=1}^p U_{y_i}\cap U_{x_0}    
\end{equation*}
and observe that $U$ is a neighborhood of $x_0$. Fixing $x \in U$ such that $F(x)\neq\emptyset$ and taking any $z \in F(x)$, it follows from \eqref{ct4} that $z\in V_{y_i}$ for some $i\in \{1, \ldots, p\}$. Since $(x, z)\in U_{y_i}\times V_{y_i}$, we deduce from \eqref{ct3} that $\gamma<\phi(x, z)$. Thus, $\gamma\leq \mu(x)$. Note that $\mu(x)=\infty$ if $F(x)=\emptyset$. Consequently,
$$
\lambda<\gamma\leq \mu(x)\; \text{ for all } x\in U,
$$
which verifies the claimed lower semicontinuity of $\mu$.
\end{proof}\vspace*{-0.1in}

Now we study Lipschitzian properties of optimal value functions in the setting of normed spaces $(X, \|\cdot\|_X)$ and $(Y, \|\cdot\|_Y)$. Equip the product space $X \times Y$ with the \emph{sum norm}
$\|(x, y)\|: = \|x\|_X + \|y\|_Y$ for all $(x, y) \in X \times Y$ and use the simplified notation $\|\cdot\|$ for the norms on $X$, $Y$, and $X \times Y$ whenever no confusion arises.

Given a single-valued mapping  $f \colon X \to \overline{\mathbb{R}}$ and a set $\Omega\subset \dom_\R\, f$, recall that $f$ is \emph{Lipschitz continuous} on $\Omega$ if there exists a constant $\ell \geq 0$ such that
\begin{equation}\label{lip-f}
|f(x) - f(u)| \leq \ell \|x - u\| \; \text{for all }\;x, u \in \Omega.
\end{equation}
This mapping is \emph{locally Lipschitz continuous} around $x_0 \in  \dom_\R\, f$ if there exist a neighborhood $U$ of $x_0$ and a constant $\ell \geq 0$ such that \eqref{lip-f} holds with $\Om=U$.

Recall further the two (global and local) Lipschitzian properties of set-valued mappings between normed spaces that are used in what follows. 

\begin{definition}\label{lip-set} Let $F \colon X \rightrightarrows Y$ be a set-valued mapping between normed spaces. Then:

{\bf(i)} $F$ has the $($Hausdorff$)$ \emph{Lipschitz continuous} property if there exists $\ell \geq 0$ with
$$
F(u) \subset F(x) + \ell \|x-u\| \mathbb{B}_Y \;\text{for all }\;x, u \in X,
$$
where $\mathbb{B}_Y$ denotes the closed unit ball in $Y$.

{\bf(ii)} $F$ has the $($Aubin$)$ \emph{Lipschitz-like} property around a given pair $(x_0, y_0)\in\gph F$ if there is a constant $\ell \geq 0$ and neighborhoods $U$ of $x_0$ and $V$ of $y_0$ such that
$$
F(u) \cap V \subset F(x) + \ell \|u - x\| \mathbb{B}_Y \; \text{ for all }\;x, u \in U.
$$
\end{definition}\vspace*{-0.1in}

The following theorem provides conditions ensuring the {\em global} Lipschitz continuity property of the optimal value function on its finiteness domain.

\begin{theorem}\label{lipH} For the optimal value function $\mu$ defined by \eqref{optimalfunction} in normed spaces, suppose that the cost function $\phi\colon X\times Y\to \R$ is Lipschitz continuous on $X\times Y$ and the constraint multifunction $F$ is Lipschitz continuous on $X$. Then $\mu$ is  Lipschitz continuous on the set $\dom_{\R}\,\mu$. 
\end{theorem}\vspace*{-0.2in}
\begin{proof}
Taking any $x, u\in \dom_{\R}\,\mu$, we have $x, u\in \dom\, F$. By the imposed Lipschitz continuity of $\phi$, there exists $\ell\ge 0$ ensuring the inequalities
\begin{equation}\label{lip1}
\mu(x)\leq \phi(x, y)\leq \phi(u, y)+\ell\|x-u\|\;\ \mbox{\rm whenever }\;y\in F(x).
\end{equation}
Since the mapping $F$ is Lipschitz continuous, we find $\ell_1\ge 0$ such that   
\begin{equation*}
F(u)\subset F(x)+\ell_1\|x-u\|\B_Y.
\end{equation*}
Thus any $z\in F(u)$ can be written in the form
$z=y+\ell_1\|x-u\|e$ with some $y\in F(x)$ and $e\in \B_Y$. It readily follows from \eqref{lip1} that
\begin{equation*}
\begin{aligned}
\mu(x)&\leq \phi(u, y)+\ell\|x-u\|\\
 &=\phi(u, z-\ell_1\|x-u\|e)+\ell\|x-u\|\\
&\leq \phi(u, z)+\ell_1\ell \|x-u\|+\ell\|x-u\|,
\end{aligned}
\end{equation*}
where the last inequality uses the Lipschitz continuity of $\phi$.
Since $z\in F(u)$ was chosen arbitrarily, taking the infimum over all such $z$ yields
\begin{equation*}
\mu(x)\leq \mu(u)+L\|x-u\|,
\end{equation*}
where $L:=\ell_1\ell+\ell$. This clearly shows that $\mu$ is globally Lipschitz continuous on $\dom_{\R}\,\mu$.
\end{proof}\vspace*{-0.1in}

To proceed with deriving sufficient conditions for local continuity of the optimal value function, we introduce the following notion of  inner semicompactness  of set-valued mappings  important for furnishing limiting procedures; see Definition~1.63 in \cite{mordukhovich2006variational} and the discussion after that definition.

\begin{definition}\label{semi-comp}
Given a  multifunction $G\colon X \rightrightarrows Y$, we say that $G$ is {\em inner semicompact} at $x_0\in X$ if $x_0\in \dom\, G$ and for every sequence $x_k\to x_0$, there exists a sequence $y_k\in G(x_k)$ for all $k$ such that $\{y_k\}$  has a convergent subsequence to some $y_0\in G(x_0)$.
\end{definition}\vspace*{-0.1in}

To establish sufficient condition for {\em local} Lipschitz continuity of the optimal value function, define the {\em solution map} associated with \eqref{optimalfunction} by
\begin{equation}\label{sol}
S(x):=\big\{y\in F(x)\mid \phi(x,y)=\mu(x)\big\}, \quad x\in X.
\end{equation}

\begin{theorem}\label{MLL}
Let $\mu$ be the optimal value function defined by \eqref{optimalfunction} in normed spaces, where $F$ is of closed graph. Then $\mu$ is locally Lipschitzian around a given point $x_0\in \mbox{\rm dom}_{\mathbb R}\,{\mu}$ if the following conditions are satisfied:

{\bf(i)} The solution map $S$ from \eqref{sol} is inner semicompact  at $x_0$; 

{\bf(ii)} For any $y\in S(x_0)$, the cost function $\phi$ is locally Lipschitzian around $(x_0,y)$,   and the constraint multifunction $F$ is Lipschitz-like around the pair $(x_0,y)$.
\end{theorem}\vspace*{-0.2in}
\begin{proof}  Under the inner semicompactness of $S$ and the local Lipschitz property of $\phi$, we can show that $\mu$ is finite on a neighborhood of $x_0$. Suppose on the contrary that $\mu$ is not locally Lipschitzian around $x_0$. Then we can construct sequences $\{x_k\}\subset \dom_{\R}\,\mu$ and $\{u_k\}\subset \dom_{\R}\,\mu$ converging to $x_0$ such that
\begin{equation*}
k\|x_k-u_k\|<|\mu(x_k)-\mu(u_k)|\; \ \mbox{\rm for all }\;k\in \N.
\end{equation*}
Since $S$ is inner semicompact at $x_0$, by using a subsequential argument, we may assume that there exist sequences $y_k\in S(x_k)$ and $z_k\in S(u_k)$ with $y_k\to y_0$ and $z_k\to z_0$. By the closedness of $\gph\, F$, it follows that $(x_0, y_0)\in \gph\, F$ and $(x_0, z_0)\in \gph \, F$. Under the given assumptions, there exist a constant $\ell\geq 0$ and neighborhoods $U$ of $x_0$, $V_1$ of $y_0$ and $V_2$ of $z_0$ such that
\begin{equation}\label{LLI}
|\phi(x, y)-\phi(u, z)|\leq \ell (\|x-u\|+\|y-z\|)
\end{equation}
whenever $(x, y), (u, z)\in U\times V_1$ or $(x, y), (u, z)\in U\times V_2$. Moreover, we have
\begin{equation}\label{API}
F(u)\cap V_i\subset F(x)+\ell\|x-u\|\, \B_Y\;\mbox{ for all }\;x, u\in U\;\mbox{ and }\;i=1, 2.
\end{equation}

For sufficiently large $k$, consider the two possible cases: either $\mu(x_k)>\mu(u_k)$, or $\mu(x_k)<~\mu(u_k)$. In the first case where $\mu(x_k) > \mu(u_k)$, we have
\begin{equation*}\label{locallip1}
k\|x_k-u_k\|<\mu(x_k)-\mu(u_k)=\mu(x_k)-\phi(u_k, z_k).
\end{equation*}
Since $z_k \in F(u_k)$ and $z_k\to z_0$, it follows from \eqref{API} that there exists $\widetilde{y}_k \in F(x_k)$ with $\|z_k-\widetilde{y}_k\|\leq \ell\|x_k-u_k\|$. Applying \eqref{LLI} gives us
\begin{equation*}
\begin{array}{ll}
k\|x_k-u_k\|&<\mu(x_k)-\phi(u_k, z_k)\\
&\leq \phi(x_k, \widetilde{y}_k)-\phi(u_k, z_k)\\
&\leq \ell(\|x_k-u_k\|+\|\widetilde{y}_k-z_k\|)\\
&\leq (\ell+\ell^2)\|x_k-u_k\|.
\end{array}
\end{equation*}
In the second case where $\mu(x_k) < \mu(u_k)$, we have
\begin{equation*}\label{locallip2}
k\|x_k-u_k\|<\mu(u_k)-\mu(x_k)=\mu(u_k)-\phi(x_k, y_k).
\end{equation*}
Using similar arguments as above yields the existence of $\widetilde{z}_k \in F(u_k)$ such that
$\|y_k-\widetilde{z}_k\|\leq \ell\|x_k-u_k\|$, which leads us to the estimates
\begin{equation*} 
\begin{array}{ll}
k\|x_k-u_k\|&<\mu(u_k)-\phi(x_k, y_k)\\
&\leq \phi(u_k, \widetilde{z}_k)-\phi(x_k, y_k)\\
&\leq \ell(\|u_k-x_k\|+\|\widetilde{z}_k-y_k\|)\\
&\leq (\ell+\ell^2)\|x_k-u_k\|. 
\end{array}
\end{equation*}
The obtained contradiction shows that $\mu$ is locally Lipschitzian around $x_0$.
\end{proof}\vspace*{-0.1in}

To conclude this section, we present a new result on the local Lipschitz continuity of $\mu$ under the int-near convexity. This is a direct consequence of 
Proposition~\ref{pronearlyconvexlip}(iv) and Theorem \ref{thmmuconvex}.
	
\begin{proposition}\label{pronearlyconvexlip1} For the optimal value function  $\mu$ defined by \eqref{optimalfunction} in normed spaces, assumed that $\mu$ is proper, $\phi$ is an int-nearly convex function, and $F$ is an int-nearly convex set-valued mapping. Suppose further that condition \eqref{QC1} is satisfied. Then $\mu$ is locally Lipschitz continuous on the interior of its domain, which is a nonempty convex set. 
\end{proposition}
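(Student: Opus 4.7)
The plan is to reduce the statement to the already established Proposition \ref{pronearlyconvexlip}(d) by first securing that $\mu$ itself is a proper int-nearly convex function under the given hypotheses. Since $\phi$ is int-nearly convex, $F$ is int-nearly convex, and the interior compatibility condition \eqref{QC1} holds, Theorem \ref{thmmuconvex}(b) yields immediately that $\mu$ is int-nearly convex. Combined with the standing assumption that $\mu$ is proper, this places us exactly in the framework of Proposition \ref{pronearlyconvexlip}.

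With that in hand, I would invoke Proposition \ref{pronearlyconvexlip}(d) to conclude that $\mu$ is locally Lipschitz continuous on $\mbox{\rm int}(\dom\,\mu)$. This is the main analytic content of the proposition and requires no further work once the int-near convexity of $\mu$ has been established.

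It remains to verify that $\mbox{\rm int}(\dom\,\mu)$ is a nonempty convex set. For nonemptiness, I would apply Proposition \ref{pronearlyconvexlip}(b), which tells us that $\dom\,\mu$ is int-nearly convex; by Definition \ref{def34}(b) there then exists a convex set $C$ with $\mbox{\rm int}\,C \neq \emptyset$ satisfying $C \subset \dom\,\mu \subset \overline{C}$, so $\emptyset \neq \mbox{\rm int}\,C \subset \mbox{\rm int}(\dom\,\mu)$. For convexity, Lemma \ref{lemmaint} applied to $\dom\,\mu$ yields $\mbox{\rm int}(\dom\,\mu) = \mbox{\rm int}\,\overline{\dom\,\mu}$, while Remark \ref{remark1} (equivalently, Proposition \ref{NCC}) guarantees that $\overline{\dom\,\mu}$ is convex; since the interior of a convex subset of a topological vector space is convex, the assertion follows.

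In this case there is no real obstacle: the proof is a direct chaining of Theorem \ref{thmmuconvex}(b), parts (b) and (d) of Proposition \ref{pronearlyconvexlip}, and Lemma \ref{lemmaint}, all of which have been proved earlier. The only point that deserves a brief explicit mention is the passage from int-near convexity of $\dom\,\mu$ to the nonemptiness and convexity of $\mbox{\rm int}(\dom\,\mu)$, which, as indicated above, is a short application of Lemma \ref{lemmaint} together with the convexity of the closure.
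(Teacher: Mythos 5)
Your proposal is correct and follows essentially the same route as the paper, which simply notes that the result is a direct consequence of Proposition \ref{pronearlyconvexlip}(d) once Theorem \ref{thmmuconvex}(b) has delivered the int-near convexity of $\mu$. Your additional verification that $\mbox{\rm int}(\dom\,\mu)$ is nonempty and convex via Proposition \ref{pronearlyconvexlip}(b) and Lemma \ref{lemmaint} is a correct and slightly more explicit spelling-out of what the paper leaves implicit.
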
\vspace*{-0.3in}

\section{Fenchel Conjugate of Optimal Value Functions}
\label{sectionsubdifferential}\vspace*{-0.1in}

This section is devoted to deriving a representation formula for the Fenchel conjugate of the optimal value function \eqref{optimalfunction} with
int-nearly convex data in LCTV spaces. First we recall the notion of Fenchel conjugates for set-valued mappings recently introduced in \cite{nam2024notion}.

\begin{definition}\label{fen}
Given a set-valued mapping $F\colon X\tto Y$, the \textit{Fenchel conjugate} of $F$ is the extended-real-valued function $F^*\colon X^*\times Y^* \to \oR$ defined by
\begin{equation*}
F^*(x^*,y^*):=\sup\big\{\la x^*,x\ra+\la y^*,y\ra\mid (x,y)\in \gph\, F\big\}\; \text{ for } \;(x^*,y^*)\in X^*\times Y^*.
\end{equation*}
\end{definition}

It is easy to see from the definition of the support function \eqref{supportfunction} that the Fenchel conjugate of $F$ is the support function of $\gph\, F$, i.e.,
\begin{equation}\label{fen-rep}
F^*(x^*,y^*)=\sigma_{\gph\, F}(x^*, y^*)\text{ for } (x^*,y^*)\in X^*\times Y^*.
\end{equation}
Having \eqref{fen-rep} in mind, the following lemma provides a technical tool for representing the Fenchel conjugate of the optimal value function in the int-near convex setting. 

\begin{lemma}\label{lemsigma}
Let $\Om_1$ and $\Om_2$ be int-nearly convex subsets of $X$. Assume that 
\begin{equation}\label{c4.23a}
({\rm int}\, \Om_1)\cap ({\rm int}\, \Om_2)\neq \emptyset.    
\end{equation}
Then for every $x^*\in \dom\,\sigma_{\Om_1\cap \Om_2}$, there exist $x^*_1,x^*_2\in X^*$ such that $x_1^*+x_2^*=x^*$ and
\begin{equation}\label{conj1}
\sigma_{\Om_1\cap\Om_2}(x^*)=\sigma_{\Om_1}(x_1^*)+\sigma_{\Om_2}(x_2^*).
\end{equation}
\end{lemma}\vspace*{-0.2in}
\begin{proof}
Since $\Omega_1$ and $\Omega_2$ are int-nearly convex, there exist convex sets $C_1$ and $C_2$ with ${\rm int}\, C_1 \neq \emptyset$ and ${\rm int}\, C_2 \neq \emptyset$ such that
\begin{equation}\label{est0}
C_1 \subset \Omega_1 \subset \overline{C_1} \; \text{and} \; C_2 \subset \Omega_2 \subset \overline{C_2}.
\end{equation}
It is well known that $\sigma_{C_i}(x^*)=\sigma_{\overline{C}_i}(x^*)$ for all $x^*\in X^*$, $i=1,2$. Therefore, we obtain 
\begin{equation}\label{estimation0}
\sigma_{C_i}(x^*)=\sigma_{\Om_i}(x^*).
\end{equation}
Moreover, it follows from Lemma~\ref{lemmaint} that
$$
{\rm int}\, C_1 = {\rm int}\, \Omega_1\; \text{ and } \; {\rm int}\, C_2 = {\rm int}\, \Omega_2.$$
This together with \eqref{c4.23a} verifies assumption~(a) of \cite[Theorem~4.23]{mordukhovich2022convex}.
Applying that theorem, there exist $x^*_1$ and $x^*_2$ in $X^*$ such that $x_1^*+x_2^*=~x^*$ and $$\sigma_{C_1\cap C_2}(x^*) = \sigma_{C_1}(x_1^*)+\sigma_{C_2}(x_2^*).$$
Combining this with \eqref{est0} and \eqref{estimation0} yields the lower estimate
\begin{equation}\label{etimation1}
\begin{array}{ll}
\sigma_{\Om_1\cap\Om_2}(x^*)&\geq \sigma_{C_1\cap C_2}(x^*) \\
&=\sigma_{C_1}(x_1^*)+\sigma_{C_2}(x_2^*) \\
&=\sigma_{\Om_1}(x_1^*)+\sigma_{\Om_2}(x_2^*).
\end{array}
\end{equation}
On the other hand, we always have the upper estimate
$$
\sigma_{\Om_1\cap\Om_2}(x^*) \leq \sigma_{\Om_1}(x_1^*)+\sigma_{\Om_2}(x_2^*),
$$
which being combined with \eqref{etimation1} verifies the claimed equality \eqref{conj1}.
\end{proof}\vspace*{-0.1in}

Now we establish the main result of this section providing an exact representation of the Fenchel conjugate of optimal value function \eqref{optimalfunction} in the int-nearly convex setting via the infimal convolution \eqref{inf-conv} of the conjugates for $\phi$ and $F$. 

\begin{theorem}\label{thm42} For the optimal value function $\mu$ in \eqref{optimalfunction}, assume that $\phi$ is an int-nearly convex function and $F$ is an int-nearly convex set-valued mapping. Assume further that the qualification condition \eqref{QC1} is fulfilled.
Then for every $x^*\in X^*$, we have the representation 
\begin{equation*}
\mu^*(x^*)=(\phi^*\square F^*)(x^*,0).
\end{equation*}
\end{theorem}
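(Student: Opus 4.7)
The plan is to view $\mu^*(x^*)$ as the value of a support function on the product space $X \times Y \times \R$ and then apply Lemma \ref{lemsigma} to obtain a separating decomposition.

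First, I unfold the definition to get $\mu^*(x^*) = \sup\{\langle x^*, x\rangle - \phi(x,y) : (x,y) \in \gph\, F\}$. Introducing a slack variable $\lambda \geq \phi(x,y)$ recasts this as
\[
\mu^*(x^*) = \sup\{\langle x^*, x\rangle + \langle 0, y\rangle - \lambda : (x,y,\lambda) \in \Omega \cap \Theta\} = \sigma_{\Omega \cap \Theta}(x^*, 0, -1),
\]
with $\Omega$ and $\Theta$ as in \eqref{om12}. Next I verify the hypotheses of Lemma \ref{lemsigma} in $X \times Y \times \R$: by assumption $\Omega$ is int-nearly convex, while $\Theta$ is int-nearly convex because $\gph\, F$ is and the Cartesian product with $\R$ preserves the witnessing inclusions $C \times \R \subset \gph\, F \times \R \subset \overline{C} \times \R$ together with the nonempty interior; condition \eqref{QC1} supplies the required interior intersection.

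For the easy inequality $(\phi^* \square F^*)(x^*, 0) \geq \mu^*(x^*)$, I use a Fenchel--Young estimate: for any decomposition $(x_1^*, y_1^*) + (x_2^*, y_2^*) = (x^*, 0)$ and any $(x, y) \in \gph\, F$,
\[
\phi^*(x_1^*, y_1^*) + F^*(x_2^*, y_2^*) \geq \langle x_1^*, x\rangle + \langle y_1^*, y\rangle - \phi(x,y) + \langle x_2^*, x\rangle + \langle y_2^*, y\rangle = \langle x^*, x\rangle - \phi(x,y),
\]
so taking the supremum over $(x,y) \in \gph\, F$ and then the infimum over the decomposition yields the claim.

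For the reverse inequality I may assume $\mu^*(x^*) < \infty$, which places $(x^*, 0, -1)$ in $\dom\,\sigma_{\Omega \cap \Theta}$. Lemma \ref{lemsigma} produces $(x_1^*, y_1^*, \alpha_1)$ and $(x_2^*, y_2^*, \alpha_2)$ summing to $(x^*, 0, -1)$ with $\mu^*(x^*) = \sigma_\Omega(x_1^*, y_1^*, \alpha_1) + \sigma_\Theta(x_2^*, y_2^*, \alpha_2)$. Since $\Theta$ contains full vertical lines in the $\lambda$-coordinate, finiteness of $\sigma_\Theta$ forces $\alpha_2 = 0$; hence $\alpha_1 = -1$ and the third-coordinate component of $\Omega$ is handled by the fact that $\epi\,\phi$ recedes in the $+\lambda$ direction. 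The standard identities $\sigma_\Omega(x_1^*, y_1^*, -1) = \phi^*(x_1^*, y_1^*)$ and $\sigma_\Theta(x_2^*, y_2^*, 0) = F^*(x_2^*, y_2^*)$ then yield a decomposition of $(x^*, 0)$ attaining $(\phi^* \square F^*)(x^*, 0) \leq \mu^*(x^*)$, completing the proof.

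The main obstacle I anticipate is the third-coordinate bookkeeping in the last step, namely justifying that the recession structures of $\Omega$ and $\Theta$ force $\alpha_2 = 0$ and $\alpha_1 = -1$ and then converting the two support-function values into $\phi^*$ and $F^*$, respectively. Everything else is either a routine Fenchel--Young estimate or a direct invocation of Lemma \ref{lemsigma}.
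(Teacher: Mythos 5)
Your proposal is correct and follows essentially the same route as the paper's proof: rewrite $\mu^*(x^*)$ as $\sigma_{\Omega\cap\Theta}(x^*,0,-1)$, obtain the easy inequality by a Fenchel--Young estimate, and for the reverse inequality invoke Lemma \ref{lemsigma}, use the vertical-line structure of $\Theta$ to force the $\lambda$-components to be $0$ and $-1$, and convert the resulting support-function values into $\phi^*$ and $F^*$. The only difference is cosmetic: you explicitly check that $\Theta=\gph F\times\R$ is int-nearly convex, which the paper leaves implicit.
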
\vspace*{-0.2in}
\begin{proof}
Fix $x^*\in X^*$, $y^*\in Y^*$ and pick any $x^*_1, x^*_2\in X^*$ with $x^*_1+x^*_2=x^*$. We get
$$
\begin{array}{ll}
\phi^*(x_1^*,y^*)+ F^*(x_2^*,-y^*)&=\sup\{\la x_1^*,x\ra +\la y^*,y\ra -\phi(x,y)\mid (x,y)\in X\times Y\}\\
&\text{  }+\sup\{\la x_2^*,x\ra -\la y^*,y\ra \mid (x,y)\in \gph\; F\}\\
&\geq \la x_1^*,x\ra +\la y^*,y\ra -\phi(x,y) + \la x_2^*,x\ra -\la y^*,y\ra \\
&= \la x^*,x\ra -\phi(x,y) \text{ whenever }x\in X \text{ and } y\in F(x).
\end{array}$$
Taking the infimum over $y\in F(x)$ tells us that
$$
\phi^*(x_1^*,y^*)+ F^*(x_2^*,-y^*)\geq \la x^*,x\ra - \mu(x)
$$
for every $x\in X$. This leads us to the inequality
$$
\phi^*(x_1^*,y^*)+ F^*(x_2^*,-y^*)\geq \sup\{\la x^*,x\ra - \mu(x)\mid x\in X\}=\mu^*(x^*).
$$
Since $y^*$ and $x^*_1,x^*_2$ were chosen arbitrarily with $x^*_1+x^*_2=x^*$, we have the lower estimate
$$
(\phi^*\square F^*)(x^*,0)\geq \mu^*(x^*).
$$
To verify the upper estimate of the infimal convolution
\begin{equation}\label{conin}
(\phi^*\square F^*)(x^*,0)\leq \mu^*(x^*),
\end{equation}
consider the two possible cases: either $\mu^*(x^*)=\infty$ or $\mu^*(x^*)<\infty$. If $\mu^*(x^*)=\infty$, the estimate in \eqref{conin} holds trivially. Otherwise, we first deduce from \eqref{QC1} that $\mu^*(x^*)>~-\infty$, and hence $\mu^*(x^*)\in \R$. Then it follows from the definitions that
$$
\begin{array}{ll}
\mu^*(x^*)&=\sup\limits_{x\in X}\{\la x^*,x\ra -\mu (x)\}  \\
& =\sup\limits_{x\in X}\Big\{\la x^*,x\ra -\inf\limits_{y\in F(x)} \phi(x,y)\Big\}\\
&=\sup\limits_{x\in X,\; y\in F(x)} \{\la x^*,x\ra -\phi(x,y)\}\\
&= \sup\limits_{(x,y,\lambda)\in \Om\cap \Theta}\la (x^*,0,-1),(x,y,\lambda)\ra\\
&= \sigma_{\Om\cap \Theta}(x^*,0,-1), 
\end{array}
$$ 
where $\Om$ and $\Theta$ are defined by \eqref{om12}. Since the qualification condition \eqref{QC1} holds and $\sigma_{\Om\cap \Theta}(x^*,0,-1)\in\R$, we apply Lemma~\ref{lemsigma} to find $x^*_1,x^*_2\in X^*$, $y^*\in Y^*$, and $\lambda_1,\lambda_2\in\R$ such that $x^*=x^*_1+x_2^*$, $\lambda_1+\lambda_2=-1$, and
\begin{equation}\label{thm423}
\mu^*(x^*)=\sigma_{\Om\cap \Theta}(x^*,0,-1)=\sigma_{\Om}(x_1^*,y^*,\lambda_1)+\sigma_{\Theta}(x_2^*,-y^*,\lambda_2).
\end{equation}
If $\lambda_2\neq 0$, then it follows from the structure of $\Theta=(\gph\,F)\times \R$ that $\sigma_{\Theta}(x_2^*,-y^*,\lambda_2)= \infty$, which leads us to a contradiction. Therefore, we must have $\lambda_2=0$, which yields $\lambda_1=-1$.  Combining the latter with \eqref{thm423} tells us that 
$$\begin{array}{ll}
\mu^*(x^*)&=\sigma_{\Om}(x_1^*,y^*,-1)+\sigma_{\gph\; F}(x_2^*,-y^*)  \\
& =\phi^*(x_1^*,y^*)+ F^*(x_2^*, -y^*)\\
&\geq (\phi^*\square F^*)(x^*,0), 
\end{array}$$
which verifies \eqref{conin} and thus completes the proof of the theorem.
\end{proof}
\vspace*{-0.15in}

\begin{remark}\label{remark43}
If $\phi$ is a convex function and $F$ is a convex set-valued mapping, then the qualification condition \eqref{QC1} can be replaced by the following weaker ones:
\begin{equation*} \label{QCS}
\text{either }\text{\rm int} (\epi\, \phi)\cap ((\gph\, F)\times \R)\neq \emptyset,\; \text{ or }\;\epi\,\phi\cap \text{\rm int} ((\gph\; F)\times \R)\neq \emptyset. 
\end{equation*}
Under these conditions, the conclusion of Theorem~\ref{thm42} still holds.
\end{remark}\vspace*{-0.1in}

The next assertion presents a consequence of Theorem~\ref{thm42} for optimal value functions with no constraint set in \eqref{optimalfunction}. 

\begin{corollary} \label{cor1}
Given the optimal value function $\mu$ in \eqref{optimalfunction} with $\phi$ being int-nearly convex and $F(x)=Y$ for all $x\in X$, we have the conjugate relationship
\begin{equation*}
\mu^*(x^*)=\phi^*(x^*,0)\;\text{ whenever }\;x^*\in X^*.
\end{equation*}
\end{corollary}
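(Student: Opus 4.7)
The plan is to invoke Theorem~\ref{thm42} after checking that its hypotheses specialize cleanly to the present setting, and then to simplify the resulting infimal convolution using the degenerate structure of $F^*$.

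First, I would verify the qualification condition~\eqref{QC1}. Since $F(x)=Y$ for every $x\in X$, the graph $\gph\,F=X\times Y$ is convex and coincides with its own interior, so $F$ is int-nearly convex and $\Theta=(\gph\,F)\times\R=X\times Y\times\R$ is open in $X\times Y\times\R$. Because $\phi$ is int-nearly convex, Proposition~\ref{NCC} together with Lemma~\ref{lemmaint} gives ${\rm int}\,\epi\,\phi\neq\emptyset$. Therefore
$$({\rm int}\,\Omega)\cap({\rm int}\,\Theta)={\rm int}\,\epi\,\phi\neq\emptyset,$$
and \eqref{QC1} is in force. Thus Theorem~\ref{thm42} applies and yields $\mu^*(x^*)=(\phi^*\square F^*)(x^*,0)$ for every $x^*\in X^*$.

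Next, I would compute $F^*$ directly from its definition. For any $(x_2^*,y^*)\in X^*\times Y^*$,
$$F^*(x_2^*,y^*)=\sup\{\la x_2^*,x\ra+\la y^*,y\ra\mid (x,y)\in X\times Y\},$$
which equals $0$ when $(x_2^*,y^*)=(0,0)$ and $+\infty$ otherwise. In other words, $F^*$ plays the role of an indicator of the origin of $X^*\times Y^*$.

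Finally, I would expand the infimal convolution: by definition,
$$(\phi^*\square F^*)(x^*,0)=\inf\bigl\{\phi^*(x_1^*,y_1^*)+F^*(x_2^*,y_2^*)\;\big|\;(x_1^*,y_1^*)+(x_2^*,y_2^*)=(x^*,0)\bigr\}.$$
Since $F^*$ is finite only at $(0,0)$, the sole decomposition contributing a finite value is $(x_2^*,y_2^*)=(0,0)$ with $(x_1^*,y_1^*)=(x^*,0)$, giving $\mu^*(x^*)=\phi^*(x^*,0)$. There is no genuine obstacle here; the entire argument is mechanical, and the only conceptual point is to recognize that a trivial constraint $F\equiv Y$ reduces $F^*$ to an indicator that pins the $Y^*$-component of the dual decomposition to zero.
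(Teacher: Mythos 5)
Your proposal is correct and follows essentially the same route as the paper: verify the qualification condition \eqref{QC1} using the int-near convexity of $\phi$ and the fact that $\gph\,F=X\times Y$, invoke Theorem~\ref{thm42}, observe that $F^*$ is finite only at the origin, and collapse the infimal convolution to the single admissible decomposition. Your version is marginally more explicit about why \eqref{QC1} holds (noting $\Theta$ is open), but there is no substantive difference.
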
\vspace*{-0.2in}
\begin{proof} 
The int-near convexity of $\phi$ and  $\gph\,F=X\times Y$ yield
${\rm int}(\epi\, \phi)\neq \emptyset$. Therefore, the qualification  condition \eqref{QC1} is satisfied, and we deduce from Theorem~\ref{thm42} that
\begin{equation}\label{ctthm4.2}
\mu^*(x^*)=(\phi^*\square F)^*(x^*,0)=\inf\{\phi^*(x_1^*, -y^*)+F^*(x_2^*,y^*)\mid x^*_1+x^*_2=x^*,\; y^*\in Y^*\}
\end{equation}
for every $x^*\in X^*$. Observe further that
$$
F^*(x^*,y^*)=\sup\{\la x^*, x\ra+\la y^*,y\ra\mid (x,y)\in X\times Y\}=\begin{cases}
0& \mbox{ if } (x^*,y^*)=(0,0),\\
\infty &\mbox{ otherwise.}
\end{cases}$$
Consequently, the infimum in \eqref{ctthm4.2} is only concerned with the case where $x^*_2=0$ and $y^*=0$. Then we have $x^*_1=x^*$ and $-y^*=0$. Substituting the latter into \eqref{ctthm4.2} gives us the claimed equality $\mu^*(x^*)=\phi^*(x^*,0)$.
\end{proof}\vspace*{-0.1in}

\begin{remark}
In the setting of Corollary \ref{cor1}, instead of using Theorem~\ref{thm42} we can show directly that the conclusion holds for any function $\phi$, without assuming that it is int-nearly convex; see \cite[Theorem~2.6.1(i)]{zalinescu2002convex}.  
\end{remark}\vspace*{-0.3in}

\section{Subdifferentiation of Optimal Value Functions}\label{sec:subdiff}\vspace*{-0.1in}

This section addresses generalized differentiation of the optimal value function \eqref{optimalfunction} under int-near convexity assumptions in LCTV spaces. The main attention is paid to deriving calculation formulas for $\ve$-{\em subdifferentials} \eqref{subgrad} of $\mu$ and its Fenchel conjugate $\mu^*$ in terms of the given data. First we present the following crucial lemma giving us a subdifferential sum rule for int-nearly convex functions, which is certainly of its own interest.

\begin{lemma}\label{lemsubgradient}
Let $g_1,g_2\colon X\to \oR$ be proper int-nearly convex functions, let $g:=g_1+g_2$, and let $\ve\ge 0$. Assume that the following qualification condition holds:
 \begin{equation}\label{nonempty1}
{\rm int}(\epi\, g_1)\cap {\rm int}(\epi\,g_2)\neq \emptyset.	
\end{equation}
Then for any $x_0\in \dom_\R\,g$, we have the sum rule
$$
\partial_\epsilon g(x_0)=\bigcup\big\{\partial_{\epsilon_1}g_1(x_0)+\partial_{\epsilon_1}g_2(x_0)\;\big|\; \epsilon_1,\epsilon_2\geq0\;\epsilon_1+\epsilon_2=\epsilon\big\}.
$$
\end{lemma}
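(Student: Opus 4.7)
The inclusion ``$\supset$'' is immediate: for $x_i^* \in \partial_{\epsilon_i} g_i(x_0)$ with $\epsilon_1 + \epsilon_2 = \epsilon$, summing the defining inequalities $\langle x_i^*, x - x_0 \rangle \leq g_i(x) - g_i(x_0) + \epsilon_i$ over $i = 1, 2$ immediately yields $x_1^* + x_2^* \in \partial_\epsilon g(x_0)$.

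For the reverse inclusion ``$\subset$'', my strategy is to reduce to the classical convex case through the lower semicontinuous envelopes $\overline{g_1}, \overline{g_2}$, which are proper convex by Remark \ref{remark1} and Proposition \ref{pronearlyconvexlip}(a) (properness follows from the existence, guaranteed by \eqref{nonempty1}, of an interior point of $\dom\,g_i$ at which $\overline{g_i}$ coincides with $g_i$ by Proposition \ref{pronearlyconvexlip}(c) and is therefore finite). By Lemma \ref{lemmaint}, $\mbox{\rm int}(\epi\,\overline{g_i}) = \mbox{\rm int}(\epi\,g_i)$, so the qualification \eqref{nonempty1} transfers to $\overline{g_1},\overline{g_2}$. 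I would then use the Fenchel--Young characterization
\[
x^* \in \partial_\epsilon g(x_0) \iff g(x_0) + g^*(x^*) - \langle x^*, x_0 \rangle \leq \epsilon,
\]
and its analogue for $g_i$, to reduce the task to proving the identity $g^* = g_1^* \square g_2^*$ with the infimum attained wherever it is finite.

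The central identity is obtained by chaining (together with the standard observation that the Fenchel conjugate is unaffected by passing to the lsc envelope, so $g^* = (\overline{g})^*$ and $g_i^* = \overline{g_i}^*$): (i) the pointwise identity $\overline{g_1 + g_2} = \overline{g_1} + \overline{g_2}$ on $X$, which I would establish by selecting an interior point $\tilde{x}$ from \eqref{nonempty1} and forming $x_n = (1 - \alpha_n) x_0 + \alpha_n \tilde{x}$ with $\alpha_n \downarrow 0$: the line-segment principle for convex sets places $x_n$ in $\mbox{\rm int}(\dom\,g_i)$ for every $\alpha_n > 0$, Proposition \ref{pronearlyconvexlip}(c) then yields $g_i(x_n) = \overline{g_i}(x_n)$, and the convexity together with the lower semicontinuity of $\overline{g_i}$ forces $\overline{g_i}(x_n) \to \overline{g_i}(x_0)$, producing a common approximating sequence along which $g(x_n) \to \overline{g_1}(x_0) + \overline{g_2}(x_0)$; (ii) the classical Moreau--Rockafellar sum formula (see, e.g., \cite[Theorem 2.8.7]{zalinescu2002convex}) applied to the proper convex lsc functions $\overline{g_1}, \overline{g_2}$ under their transferred qualification, giving $(\overline{g_1} + \overline{g_2})^* = \overline{g_1}^* \square \overline{g_2}^*$ with the infimum attained. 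Chaining: $g^* = (\overline{g_1 + g_2})^* = (\overline{g_1} + \overline{g_2})^* = \overline{g_1}^* \square \overline{g_2}^* = g_1^* \square g_2^*$ with attainment.

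Given $x^* \in \partial_\epsilon g(x_0)$, the attainment produces a decomposition $x^* = x_1^* + x_2^*$ satisfying $g_1^*(x_1^*) + g_2^*(x_2^*) = g^*(x^*) \leq \langle x^*, x_0 \rangle + \epsilon - g(x_0)$. Setting $\alpha_i = g_i(x_0) + g_i^*(x_i^*) - \langle x_i^*, x_0 \rangle$, the Fenchel--Young inequality gives $\alpha_i \geq 0$ with $\alpha_1 + \alpha_2 \leq \epsilon$; absorbing the slack by taking $\epsilon_1 = \alpha_1$ and $\epsilon_2 = \epsilon - \alpha_1 \geq \alpha_2$ yields $x_i^* \in \partial_{\epsilon_i} g_i(x_0)$ with $\epsilon_1 + \epsilon_2 = \epsilon$, completing the proof. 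The main technical obstacle is step (i): establishing $\overline{g_1 + g_2} = \overline{g_1} + \overline{g_2}$ at arbitrary $x_0 \in \dom_{\mathbb R}\,g$, including points on the boundary of $\dom\,g_i$, which is precisely where the int-near convexity (through Proposition \ref{pronearlyconvexlip}(c)) and the qualification \eqref{nonempty1} (supplying an interior direction) must be orchestrated simultaneously to produce a single synchronized approximating sequence for both envelopes.
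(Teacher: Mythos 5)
Your proposal is correct, and the easy inclusion and the final $\epsilon$-splitting via Fenchel--Young coincide with the paper's argument; but you reach the pivotal identity $g^*(x^*)=g_1^*(x_1^*)+g_2^*(x_2^*)$ (exact infimal convolution) by a genuinely different route. The paper never passes to lower semicontinuous envelopes: it lifts the problem to $X\times\R\times\R$ by setting $\Omega_1=(\epi\,g_1)\times\R$ and $\Omega_2=\{(x,\lambda_1,\lambda_2)\mid g_2(x)\le\lambda_2\}$, checks that \eqref{nonempty1} forces $({\rm int}\,\Omega_1)\cap({\rm int}\,\Omega_2)\neq\emptyset$, writes $g^*(x^*)=\sigma_{\Omega_1\cap\Omega_2}(x^*,-1,-1)$, and invokes its support-function intersection rule (Lemma~\ref{lemsigma}); an elimination argument ($\beta_1=0$, $\alpha_2=0$) then collapses the decomposition to $g_1^*(x_1^*)+g_2^*(x_2^*)$. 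Because support functions are blind to the difference between a set, its convex core, and its closure, this route absorbs the passage from $g_i$ to $\Bar{g_i}$ for free. Your route instead reduces to the textbook Moreau--Rockafellar formula for the proper convex lsc functions $\Bar{g_1},\Bar{g_2}$, at the price of the commutation step $(g_1+g_2)^*=(\Bar{g_1}+\Bar{g_2})^*$; you correctly identify this as the technical crux, and your synchronized line-segment argument (using Proposition~\ref{pronearlyconvexlip}(c) at the interior points $x_n$, plus convexity for $\limsup_n\Bar{g_i}(x_n)\le\Bar{g_i}(x_0)$ and lower semicontinuity for the reverse) does close it on $\dom\,\Bar{g_1}\cap\dom\,\Bar{g_2}$, which is all the conjugate identity requires --- the blanket claim of equality ``on $X$'' is stronger than needed and should be restricted accordingly. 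Both arguments ultimately rest on the same interiority qualification; yours trades the paper's bespoke Lemma~\ref{lemsigma} for a classical theorem plus an envelope computation, while the paper's is shorter once that lemma is in hand.
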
\vspace*{-0.2in}
\begin{proof}
Observe first that the inclusion
$$
\partial_\epsilon g(x_0)\supset\bigcup\big\{\partial_{\epsilon_1}g_1(x_0)+\partial_{\epsilon_1}g_2(x_0)\; \big |\;  \epsilon_1,\epsilon_2\geq0\;\epsilon_1+\epsilon_2=\epsilon\big\}
$$
follows directly from the definition of $\epsilon$-subdifferentials. To verify the reverse inclusion, define the two int-nearly convex sets by
$$
\Omega_1:=(\epi\,g_1) \times \mathbb{R}\;\text{ and }\; \Omega_2:=\left\{\left(x, \lambda_1, \lambda_2\right) \in X \times\mathbb{R} \times \mathbb{R} \mid g_2(x)\leq\lambda_2 \right\}.
$$
It follows from \eqref{nonempty1} that there exists a point $(\hat x, \hat\lambda) \in X \times \mathbb{R}$ such that
$$(
x_0, \lambda_0) \in {\rm int}(\epi\, g_1) \cap {\rm int}(\epi\, g_2).
$$
This yields the existence of open sets  $U$ in $X$ and $V$ in $\R$ such that $(x_0, \lambda_0)\in U\times V \subset \operatorname{epi} g_1$ and $(x_0, \lambda_0)\in U \times V \subset \operatorname{epi} g_2$. Then we have 
$$
(\hat x, \hat\lambda, \hat\lambda) \in U\times V\times \R\subset \Omega_1\;\text{ and } \;	(\hat x, \hat\lambda, \hat\lambda) \in  U\times\R\times V \subset\Omega_2
$$
and hence arrive at the condition
\begin{equation}\label{nonempty2}
(\operatorname{int} \Omega_1) \cap (\operatorname{int} \Omega_2) \neq \emptyset.
\end{equation}
Picking any $x^*\in \dom\,g^*=\dom\,(g_1+g_2)^*$ gives us the equalities
$$
\begin{array}{ll}
g^*(x^*)=(g_1+g_2)^*(x^*)&=\sup\{\la x^*,x\ra -(g_1+g_2)(x)\mid x\in \dom(g_1+g_2)\}\\
&=\sup\{\la x^*,x\ra -\lambda_1-\lambda_2)\mid (x,\lambda_1,\lambda_2)\in\Om_1\cap\Om_2\}\\
&=\sigma_{\Om_1\cap\Om_2}(x^*,-1,-1).
\end{array}$$
Since $x^*\in \dom\,g^*$ and $\Om_1\cap \Om_2\neq \emptyset$, it follows that $\sigma_{\Om_1\cap\Om_2}(x^*,-1,-1)\in \R$. By \eqref{nonempty2}, we apply 
Lemma~\ref{lemsigma} to find triples
$(x_1^*,\alpha_1,\beta_1),\,(x_2^*,\alpha_2,\beta_2)\in X^*\times \R\times \R $ 
such that
$(x^*,-1,-1)=(x_1^*,\alpha_1,\beta_1)+(x_2^*,\alpha_2,\beta_2)$ and
$$
g^*(x^*)=\sigma_{\Om_1\cap\Om_2}(x^*,-1,-1)=\sigma_{\Om_1}(x_1^*,\alpha_1,\beta_1)+\sigma_{\Om_2}(x_2^*,\alpha_2,\beta_2).
$$
If $\beta_1\neq 0$, then $\sigma_{\Om_1}(x_1^*,-\alpha_1,-\beta_1)=\infty$, which is impossible. Thus $\beta_1=0$, and similarly we have $\alpha_2=0$. It follows therefore that   \begin{equation}\label{fenchelformula}
\begin{array}{ll}
g^*(x^*)&=\sigma_{\Om_1\cap\Om_2}(x^*,-1,-1)\\
&=\sigma_{\Om_1}(x^*,-1,0)+\sigma_{\Om_2}(x^*,0,-1)\\
&=\sigma_{\epi\,g_1}(x^*,-1)+\sigma_{\epi\,g_2}(x^*,-1)\\
&=g_1^*(x_1^*)+g_2^*(x_2^*).
\end{array}
\end{equation}
Fix any $x^*\in \partial_\epsilon g(x_0)$, where $x_0\in \dom_\R\,g$, and get by definitions that
$$
g(x_0)+g^*(x^*)\leq \la x^*,x_0\ra+\epsilon.
$$
It follows from \eqref{fenchelformula} that there exist $x^*_1,x^*_2\in X^*$ such that $x^*=x^*_1+x^*_2$ and
 \begin{equation*}
g_1(x_0)+ g_2(x_0)+g_1^*(x_1^*)+g_2^*(x_2^*)\leq \la x_1^*,x_0\ra+\la x_2^*,x_0\ra+\epsilon.
\end{equation*}
Hence there exist $\epsilon_1,\epsilon_2\geq 0$ with $\epsilon_1+\epsilon_2=\epsilon$ satisfying 
$$
g_1(x_0)+g_1^*(x_1^*)\leq \la x_1^*,x_0\ra+\epsilon_1\;\text{ and }\;g_2(x_0)+g_2^*(x_2^*)\leq \la x_2^*,x_0\ra+\epsilon_2,
$$
which yields $x^*_1\in \partial_{\epsilon_1} g_1(x_0)$, $x^*_2\in \partial_{\epsilon_2} g_2(x_0)$ and thus finishes the proof.
\end{proof}\vspace*{-0.1in}

For the optimal value function~\eqref{optimalfunction}, take any $\eta > 0$ and $x \in \dom_\R\, \mu$ and then consider the {\em approximate solution map} defined by
\begin{equation}\label{Sapp}
S_\eta(x):=\{y\in F(x)\; |\; \phi(x, y)<\mu(x)+\eta\} \;\text{ for } x\in X.
\end{equation}
It follows from the definition of $\mu$ and $S_\eta$ in \eqref{Sapp} that
\begin{equation}\label{emptyS}
S_\eta(x)\neq\emptyset\;\text{ for all }\;\eta>0.
\end{equation}
The following theorem establishes a calculating formula for the $\epsilon$-subdifferential of the optimal value function $\mu$ via the $\epsilon$-coderivative of the constraint mapping $F$ and the $\epsilon$-subdifferential of the cost function $\phi$. Some steps in the proof below are inspired by the proof of \cite[Theorem~2.6.2]{zalinescu2002convex} that derives related while different results in the convex setting.

\begin{theorem}\label{thm1} Taking $\mu$ from \eqref{optimalfunction}, assume that $\phi$ is an int-nearly convex function and that $F$ is an int-nearly convex set-valued mapping.  Assume further that $\mu$ is proper and that  
\begin{equation}\label{QC14}
{\rm int}(\epi\,\phi)\cap {\rm int}((\gph\, F)\times [0,\infty))\neq \emptyset.
\end{equation}
Then for any $\epsilon\in\mathbb R_+$ and any $x_0\in \dom_\R\,\mu$, we have the representation
\begin{equation*}
\partial_\epsilon\mu(x_0)=\bigcap_{\eta>0}\;\bigcap_{y_0\in S_\eta(x_0)}\;\bigcup_{\epsilon_1+\epsilon_2=\epsilon+\eta,\,
\epsilon_1,\epsilon_2\geq 0}\big\{x_1^*+D_{\epsilon_2}^*F(x_0, y_0)(y_1^*)\; \big|\; (x_1^*, y_1^*)\in \partial_{\epsilon_1} \phi(x_0, y_0)\big\}.
\end{equation*}
\end{theorem}\vspace*{-0.2in}
\begin{proof} 
To verify the inclusion ``$\subset$'' in the claimed formula, take any $x^*\in \partial_\epsilon \mu(x_0)$ and have from the $\epsilon$-subdifferential construction that
\begin{equation}\label{key1}
\la x^*, x-x_0\ra\leq \mu(x)-\mu(x_0)+\epsilon\;\text{ for all } x\in X.
\end{equation}
Picking $\eta>0$ and $y_0\in S_\eta(x_0)$, we get $y_0\in F(x_0)$ and $\phi(x_0, y_0)<\mu(x_0)+\eta$. Combining this with \eqref{key1} yields the estimate
\begin{equation*}
\la x^*, x-x_0\ra\leq \phi(x, y)-\phi(x_0, y_0)+\epsilon+\eta\;\  \mbox{ whenever }\;(x, y)\in \gph\;F.
\end{equation*}
Define the function 
$\varphi(x, y):=\phi(x, y)+\delta((x, y); \gph\;F)\; \text{ for } (x,y)\in X\times Y$
and verify that
\begin{equation}\label{key2}
(x^*,0)\in \partial_{\epsilon+\eta}\varphi(x_0, y_0).
\end{equation}
Indeed, considering the two cases $(x, y)\in \gph\;F$ and $(x, y)\notin \gph\;F$, we obtain
\begin{equation*}
\la x^*, x-x_0\ra+\la 0, y-y_0\ra\leq \varphi(x, y)-\varphi(x_0, y_0)+\epsilon+\eta\; \mbox{\rm whenever }\;(x, y)\in X\times Y
\end{equation*}
in both cases, and thus inclusion \eqref{key2} holds. Observe that $\epi\,\delta(\cdot; \gph F)=(\gph\, F)\times [0, \infty)$.   
Since condition \eqref{QC14} is imposed, we can apply Lemma~\ref{lemsubgradient} to the function $\varphi$ and get
$$
\partial_{\epsilon+\eta}\varphi(x_0,y_0)=\bigcup\big\{\partial_{\epsilon_1} \phi(x_0, y_0)+\partial_{\epsilon_2}\delta((x_0,y_0);\gph\, F)\;\big|\; \epsilon_1,\epsilon_2\geq 0,\; \epsilon_1+\epsilon_2=\epsilon+\eta\big\}.
$$
Combining the latter with \eqref{key2} implies that 
there exist $\epsilon_1, \epsilon_2\geq 0$ with $\epsilon_1+\epsilon_2=\epsilon+\eta$ and
$(x^*_1, y^*_1)\in \partial_{\epsilon_1} \phi(x_0, y_0)$, $(x^*_2,y^*_2)\in \partial_{\epsilon_2}\delta((x_0,y_0);\gph F)$ such that
\begin{equation*}
(x^*,0)=(x^*_1,y^*_1)+(x^*_2,y^*_2).
\end{equation*}
Then it follows from \eqref{normalcone} that $(x^*_2,-y^*_1)\in N_{\epsilon_2}((x_0, y_0);\gph\, F)$. By \eqref{coderivative}, we have
\begin{equation*}
x^*=x^*_1+x^*_2\in  x^*_1+D^*_{\epsilon_2}F(x_0, y_0)(y_1^*),
\end{equation*}
which completes the proof of the inclusion ``$\subset$''.

To verify the reverse inclusion, fix any $x^*$ on the set in the right-hand side therein and for any $\eta >0$ and $y_0\in S_\eta(x_0)$, find $\epsilon_1,\epsilon_2\geq 0$ with $\epsilon_1+\epsilon_2=\epsilon+\eta$ such that
\begin{equation*}
x^*\in x_1^*+D_{\epsilon_2}^*F(x_0, y_0)(y_1^*)
\end{equation*}
for some $(x_1^*, y_1^*)\in \partial_{\epsilon_1}\phi(x_0, y_0)$. By definition of the $\epsilon$-subdifferential, we have
\begin{equation} \label{key3}
\la x_1^*, x-x_0\ra+\la y_1^*, y-y_0\ra\leq \phi(x, y)-\phi(x_0, y_0)+\epsilon_1\; \mbox{\rm for all }\;(x, y)\in X\times Y.
\end{equation}
Fix any $x\in X$ and deduce from $(x^*-x_1^*, -y_1^*)\in N_{\epsilon_2}((x_0, y_0); \gph F)$ that if $y\in F(x)$, then 
\begin{equation*}
\la x^*-x_1^*, x-x_0\ra + \la -y_1^*, y-y_0\ra\leq \epsilon_2.
\end{equation*}
This together with \eqref{key3} implies that
\begin{equation*}
\la x^*, x-x_0\ra \leq \la x_1^*, x-x_0\ra+\la y_1^*, y-y_0\ra+\epsilon_2\leq \phi(x, y)-\phi(x_0, y_0)+\epsilon+\eta
\end{equation*}
whenever $y\in F(x)$. Taking the infimum of $\phi (x,y)$ as 
$y$ ranges over $ F(x)$ gives us
\begin{equation*}
\la x^*, x-x_0\ra\leq \mu(x)-\phi(x_0, y_0)+\epsilon+\eta.
\end{equation*}
Since $y_0\in F(x_0)$, we obtain  $\mu(x_0)\leq \phi(x_0, y_0)$ and hence
\begin{equation*}
\la x^*, x-x_0\ra\leq \mu(x)-\mu(x_0)+\epsilon+\eta.
\end{equation*}
Letting $\eta\to 0^+$ leads us to the inequality
\begin{equation*}
\la x^*, x-x_0\ra\leq \mu(x)-\mu(x_0)+\epsilon,
\end{equation*}
which implies that $x^*\in \partial_\epsilon\mu(x_0)$. This completes the proof of the theorem.  
\end{proof}

\begin{remark}{\rm 
If $\phi$ is a convex function and $F$ is a convex set-valued mapping, then condition \eqref{QC14} can be replaced by the following one:
$$
\phi\text{ is continuous at }(x_0, y_0)\,\text{ for some } (x_0, y_0)\in \dom\, \phi\cap \gph\,F.
$$
Under this continuity assumption, the conclusion of Theorem~\ref{thm1} remains valid. In the special case where
$F(x)=Y$ for all $x\in X$, we recover \cite[Theorem~2.6.2(i)]{zalinescu2002convex}.}
\end{remark}

To proceed further, the next lemma is useful.

\begin{lemma}\label{lem1}
Let $\varphi \colon X\times Y\to \oR$ be defined by 
$$
\varphi(x, y)=\phi(x, y)+\delta((x, y); \gph\;F) \; \text{ for }\;(x,y)\in X\times Y.
$$
Then the optimal value function \eqref{optimalfunction} is expressed by
\begin{equation}\label{ctthm21}
\mu(x)=\inf\{\varphi(x, y)\; |\; y\in F(x)\}.
\end{equation}
Moreover, for every $x_0^*\in X^*$ we have the conjugate function representation
\begin{equation}\label{muvarphi}
\mu^*(x_0^*)=\sup\limits_{(x,y)\in X\times Y}\big\{\la (x_0^*,0),(x,y)\ra-\varphi(x, y)\big\}=\varphi^*(x_0^*,0).
\end{equation}
\end{lemma}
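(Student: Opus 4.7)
The plan is to prove the two claims in turn; both are direct consequences of the definition of $\varphi$ and standard manipulation of suprema/infima, so no deep machinery is required.

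For \eqref{ctthm21}, I would argue pointwise. Fix $x \in X$ and split on whether $y \in F(x)$ or not. If $y \in F(x)$, then $(x,y) \in \gph\,F$, so $\delta((x,y);\gph\,F) = 0$ and hence $\varphi(x,y) = \phi(x,y)$. Consequently, when we restrict the infimum in \eqref{ctthm21} to $y \in F(x)$, the two functions agree and $\inf\{\varphi(x,y) \mid y \in F(x)\} = \inf\{\phi(x,y) \mid y \in F(x)\} = \mu(x)$. (The case $F(x) = \emptyset$ is covered by the convention $\inf \emptyset = \infty$ on both sides.)

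For \eqref{muvarphi}, the second equality is just the definition of $\varphi^*$ unfolded at the point $(x_0^*, 0)$. For the first equality I would compute
\begin{equation*}
\mu^*(x_0^*) = \sup_{x \in X}\bigl\{\langle x_0^*, x\rangle - \mu(x)\bigr\} = \sup_{x \in X}\Bigl\{\langle x_0^*, x\rangle - \inf_{y \in F(x)} \phi(x,y)\Bigr\} = \sup_{x \in X}\sup_{y \in F(x)}\bigl\{\langle x_0^*, x\rangle - \phi(x,y)\bigr\},
\end{equation*}
using that $-\inf = \sup(-\,\cdot\,)$. The double supremum equals the supremum over $(x,y) \in \gph\,F$, and because $\delta(\,\cdot\,;\gph\,F)$ is $0$ on $\gph\,F$ and $+\infty$ otherwise, this in turn equals
\begin{equation*}
\sup_{(x,y) \in X \times Y}\bigl\{\langle x_0^*, x\rangle + \langle 0, y\rangle - \phi(x,y) - \delta((x,y);\gph\,F)\bigr\} = \sup_{(x,y) \in X \times Y}\bigl\{\langle (x_0^*,0),(x,y)\rangle - \varphi(x,y)\bigr\}.
\end{equation*}
The only subtlety to watch for is the handling of $x$ with $F(x) = \emptyset$ (so that $\mu(x) = \infty$ and $\langle x_0^*,x\rangle - \mu(x) = -\infty$); such $x$ contribute nothing to either side and are absorbed by the indicator/conventions on $\overline{\mathbb{R}}$, so this is the only bookkeeping step rather than a real obstacle. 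Overall, I expect no hard part in this lemma; it is essentially a change-of-variables for the Fenchel conjugate that rewrites the constraint $y \in F(x)$ as the indicator $\delta((x,y);\gph\,F)$.
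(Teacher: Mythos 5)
Your proposal is correct and follows essentially the same route as the paper: both arguments rest on the observation that $\varphi=\phi$ on $\gph\,F$ and $\varphi=\infty$ off it, so the points with $y\notin F(x)$ contribute $-\infty$ to the supremum defining $\varphi^*(x_0^*,0)$ and the conjugate reduces to a supremum over the graph, which is exactly $\mu^*(x_0^*)$. The paper treats \eqref{ctthm21} as obvious and phrases the conjugate computation by splitting the infimum over $y\in F(x)$ versus $y\notin F(x)$, but this is only a cosmetic difference from your change-of-variables presentation.
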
\vspace*{-0.2in}
\begin{proof}
Since \eqref{ctthm21} holds obviously, it suffices to verify \eqref{muvarphi}. Pick arbitrary elements $x_0^*\in X^*$ and $x\in X$. If $y\notin F(x)$, then $\varphi(x,y)=\infty$, which yields
$$
\la x_0^*,x\ra- \inf\limits_{y\notin F(x)}\varphi(x,y)=-\infty
$$
and implies therefore that
\begin{equation}\label{varphi}
\sup\limits_{x\in X}\Big\{\la x_0^*,x\ra-\inf\limits_{y\notin F(x)}\varphi(x, y)\Big\}=-\infty.
\end{equation}
It follows directly from the definitions that
$$
\mu^*(x_0^*)=\sup\limits_{x\in X}\Big\{\la x_0^*,x\ra-\inf\limits_{y\in F(x)}\varphi(x, y)\Big\}.
$$
Combining the latter with \eqref{varphi} justifies the claimed representation \eqref{muvarphi}.
\end{proof}\vspace*{-0.1in}

The second theorem of this section provides a precise representation for $\epsilon$-subgradients for the Fenchel conjugate of the optimal value function $\mu$. 

\begin{theorem}\label{thm2} In the setting of Theorem~{\rm\ref{thm1}}, suppose that the conjugate function $\mu^*$ of~\eqref{optimalfunction} is proper. Then we have the representation
\begin{equation*}
\begin{aligned}
\partial_\epsilon \mu^*(x_0^*)=\bigcap_{\eta>0}\mbox{\rm cl}\big\{ x\in X\; \big|\;&\exists\,y\in F(x)\; \mbox{\rm such that } \\
&x^*\in \bigcup_{\epsilon_1+\epsilon_2=\epsilon+\eta, \;\epsilon_i\geq 0}
\{x_1^*+D_{\epsilon_2}^*F(x, y)(y_1^*)\;|\; (x_1^*, y_1^*)\in \partial_{\epsilon_1}\phi(x^*, y^*)\big\}
\end{aligned}
\end{equation*}
whenever $\epsilon\in\mathbb R_+$ and $x_0^* \in \dom_\R\, \mu^*$.
\end{theorem}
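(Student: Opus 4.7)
The plan is to characterize $\partial_\epsilon\mu^*(x_0^*)$ through the Fenchel--Young equality, reducing it to an approximate subgradient condition on $\mu$, and then to translate that condition via Theorem~\ref{thm1}. Since condition \eqref{QC14} is stronger than \eqref{QC1}, Theorem~\ref{thmmuconvex}(b) implies that $\mu$ is int-nearly convex; combined with Remark~\ref{remark1} and the standing assumption that both $\mu$ and $\mu^*$ are proper, this gives that $\bar\mu$ is proper, convex, and lower semicontinuous, so $\mu^{**}=\bar\mu$ and $\mu^*=\bar\mu^*$. Set $\varphi(x,y)=\phi(x,y)+\delta((x,y);\gph F)$. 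Lemma~\ref{lem1} yields $\varphi^*(x_0^*,0)=\mu^*(x_0^*)$, while Lemma~\ref{lemsubgradient} (whose interior hypothesis is exactly \eqref{QC14}) produces the equivalence $(x_0^*,0)\in\partial_{\epsilon+\eta}\varphi(x,y)$ iff there exist $(x_1^*,y_1^*)\in\partial_{\epsilon_1}\phi(x,y)$ and $(x_2^*,-y_1^*)\in N_{\epsilon_2}((x,y);\gph F)$ with $\epsilon_1+\epsilon_2=\epsilon+\eta$ and $x_0^*=x_1^*+x_2^*$. Consequently, the inner braces on the right-hand side of the theorem coincide with the set $P_{\epsilon+\eta}:=\{x\in X:\exists y\in F(x),\,(x_0^*,0)\in\partial_{\epsilon+\eta}\varphi(x,y)\}$.

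For the inclusion $\subseteq$, fix $x\in\partial_\epsilon\mu^*(x_0^*)$, which by Fenchel--Young reads $\mu^*(x_0^*)+\bar\mu(x)\leq\la x_0^*,x\ra+\epsilon$. Given $\eta>0$ and any neighborhood $U$ of $x$, use continuity of $x_0^*$ to shrink $U$ so that $|\la x_0^*,u-x\ra|<\eta/2$ on $U$, and then use the definition of $\bar\mu$ to pick $u\in U$ with $\mu(u)<\bar\mu(x)+\eta/2$. Combining the two estimates yields $\mu^*(x_0^*)+\mu(u)\leq\la x_0^*,u\ra+\epsilon+\eta$, that is, $x_0^*\in\partial_{\epsilon+\eta}\mu(u)$. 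Applying Theorem~\ref{thm1} at $u$ and selecting any $y\in S_{\eta'}(u)$ (nonempty by \eqref{emptyS}) places $u$ in $P_{\epsilon+\eta+\eta'}$. Since $\eta+\eta'$ can be made smaller than any prescribed $\eta''>0$ while $u$ lies in an arbitrary neighborhood of $x$, we conclude $x\in\bigcap_{\eta''>0}\mbox{\rm cl}(P_{\epsilon+\eta''})$.

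For the reverse inclusion $\supseteq$, suppose $x\in\bigcap_{\eta>0}\mbox{\rm cl}(P_{\epsilon+\eta})$. Fix $\eta>0$ and a neighborhood $U$ of $x$ on which $|\la x_0^*,u-x\ra|<\eta$. Pick $u\in U\cap P_{\epsilon+\eta}$ with a corresponding $y\in F(u)$ satisfying $(x_0^*,0)\in\partial_{\epsilon+\eta}\varphi(u,y)$. The Fenchel--Young inequality for $\varphi$ at $(u,y)$ gives $\varphi(u,y)+\varphi^*(x_0^*,0)\leq\la x_0^*,u\ra+\epsilon+\eta$, and using $\varphi(u,y)=\phi(u,y)\geq\mu(u)$ together with Lemma~\ref{lem1} delivers $\mu(u)+\mu^*(x_0^*)\leq\la x_0^*,x\ra+\epsilon+2\eta$. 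Taking the infimum over $u\in U$ and the supremum over shrinking neighborhoods of $x$ produces $\bar\mu(x)+\mu^*(x_0^*)\leq\la x_0^*,x\ra+\epsilon+2\eta$; letting $\eta\downarrow 0$ and invoking the Fenchel--Young equivalence from the first paragraph yields $x\in\partial_\epsilon\mu^*(x_0^*)$.

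The main obstacle is two-fold. First, one must justify the identification $\mu^{**}=\bar\mu$ in the merely int-nearly convex setting, where $\mu$ itself may fail to be convex or lower semicontinuous; this is where the int-near-convexity machinery of Section~\ref{near convexity} and Theorem~\ref{thmmuconvex}(b) are indispensable. Second, one must coordinate four small parameters---the topological slack built into the outer closure, the $\eta$ governing the $\epsilon$-subdifferential, the auxiliary $\eta'$ that Theorem~\ref{thm1} forces on $S_{\eta'}(u)$, and the continuity slack $|\la x_0^*,u-x\ra|$---so that both inclusions land on exactly the same outer intersection $\bigcap_{\eta>0}\mbox{\rm cl}(P_{\epsilon+\eta})$. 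Tight bookkeeping of these quantities, rather than any single hard step, is what makes the argument delicate.
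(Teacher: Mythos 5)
Your proposal is correct and follows essentially the same route as the paper's proof: both pass through the biconjugate identity $\mu^{**}=\bar{\mu}$ via Fenchel--Young, the auxiliary function $\varphi=\phi+\delta(\cdot;\gph\, F)$ together with Lemma~\ref{lem1} and Lemma~\ref{lemsubgradient}, and the same closure/$\eta$-bookkeeping on both inclusions. The only cosmetic difference is that in the forward inclusion you detour through $x_0^*\in\partial_{\epsilon+\eta}\mu(u)$ and then invoke Theorem~\ref{thm1}, whereas the paper constructs $(x_0^*,0)\in\partial_{\epsilon+\eta}\varphi(x_V,y_V)$ directly by splitting $\eta$ into two halves.
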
\vspace*{-0.2in}
\begin{proof} Condition \eqref{QC14} clearly yields \eqref{QC1}, and thus  Theorem~\ref{thmmuconvex}(ii) tells us that $\mu$ is int-nearly convex.
Define the function
\begin{equation}\label{varphi1}
\varphi(x, y):=\phi(x, y)+\delta((x, y); \gph\,F) \; \text{ for }\;(x,y)\in X\times Y.
\end{equation} 
To verify the inclusion ``$\subset$'', pick an arbitrary number $\eta>0$ and deduce from from Remark~\ref{remark1} and \cite[Theorem~2.6.1(vii)]{zalinescu2002convex} by the properness of $\mu^*$ that $\Bar \mu$ is proper and convex. Taking any $x_0\in \partial_\epsilon \mu^*(x_0^*)$, we apply \cite[Theorem~2.3.4(i)]{zalinescu2002convex} to obtain the relationships
$$
\begin{array}{ll}
\Bar{\mu}(x_0)+ \mu^*(x_0^*)&=\mu^{**}(x_0)+\mu^*(x_0^*) \\
&=\sup\{\la x^*, x_0\ra -\mu^* (x^*)\mid x^*\in X^*\}+\mu^*(x_0^*) \\
&\leq \la x_0^*,x_0\ra+\epsilon,\\ 
\end{array}
$$
which lead us to the estimate
$$
\Bar{\mu}(x_0)+ \mu^*(x_0^*)<\la x_0^*, x_0\ra +\epsilon+\frac{\eta}{2}.
$$
It follows from \eqref{closedf} that for every neighborhood $V$ of $x_0$, there exists $x_V\in V$ such that
\begin{equation}\label{ctthm22}
\mu(x_V)+\mu^*(x_0^*) < \la x_0^*, x_V\ra +\epsilon+\frac{\eta}{2}.    
\end{equation}

Since $x_V\in \dom\, \mu$, we deduce from \eqref{emptyS} that there exists $y_V\in S_{\eta/2}(x_V)$, and hence
$$
y_V\in F(x_V)\;\text{ and }\;\phi(x_V,y_V)<\mu (x_V)+\frac{\eta}{2}.
$$
Combining this with \eqref{muvarphi}, \eqref{varphi1} and \eqref{ctthm22} gives us
\begin{equation*}\label{bdt1}
\begin{array}{ll}
\phi(x_V,y_V)+\mu^*(x_0^*)&=\varphi(x_V,y_V)+\mu^*(x_0^*)
\\&= \varphi(x_V,y_V)+\sup\limits_{(x,y)\in X\times Y}\Big\{\la (x_0^*,0), (x,y)\ra -\varphi (x,y)\Big\} \\
&< \la x_0^*, x_V\ra +\epsilon+\eta,
\end{array}
\end{equation*}
which implies that $(x_0^*,0)\in \partial_{\epsilon+\eta} \varphi(x_V,y_V)$.
Using the same arguments as in the proof of Theorem~\ref{thm1} tells us that $(x^*,0)\in \partial_{\epsilon+\eta}\varphi(x_V, y_V)$ if and only if there exist $\epsilon_i\geq 0$ with $i=1, 2$ and $\epsilon_1+\epsilon_2=\epsilon+\eta$ such that 
\begin{equation*}
x_0^*\in x_1^*+D^*_{\epsilon_2}(x_V,y_V)(y^*_1)
\end{equation*}
for some $(x_1^*, y_1^*)\in \partial_{\epsilon_1}\phi(x_V, y_V)$. Since $\eta>0$ was chosen arbitrarily, we have
\begin{equation*}
\begin{aligned}
x_0\in \bigcap_{\eta>0}\mbox{\rm cl}\big\{x\in X\; |\;&\exists y\in F(x)\; \mbox{\rm such that } \\
&x^*\in \bigcup_{\epsilon_1+\epsilon_2=\epsilon+\eta, \;\epsilon_i\geq 0}\{x_1^*+D_{\epsilon_2}^*F(x, y)(y_1^*)\mid (x_1^*, y_1^*)\in \partial_{\epsilon_1}\phi(x, y)\},
\end{aligned}
\end{equation*}
which justifies the inclusion ``$\subset$'' in the claimed formula for $\partial_\epsilon \mu^*(x_0^*)$.
    
To verify  the reverse inclusion, let $x_0$ be an arbitrary element in the set on the right-hand side. Since $x_0^*$ is continuous, for any $\eta >0$ there exists a neighborhood $V_0$ of $x_0$ such that
\begin{equation} \label{keyeta}
\la x_0^*, x\ra < \la x_0^*, x_0\ra+\frac{\eta}{2}\text{ for all } x\in V_0.
\end{equation}
On the other hand, for every neighborhood $V$ of $x_0$, we find by arguing similarly to the proof of Theorem~\ref{thm1} a pair $(x_V,y_V)\in  X\times Y$ such that $x_V\in V_0\cap V$, $y_V\in F(x_V)$ and 
\begin{equation*}
 (x_0^*,0)\in \partial_{\epsilon+\eta/2} \varphi(x_V,y_V).
\end{equation*}
Combining the latter with \eqref{ctthm21}, \eqref{muvarphi}, and \eqref{keyeta} yields
$$
\begin{array}{ll}
\mu(x_V)+ \mu^*(x_0^*) &\leq \varphi(x_V,y_V)+\sup\limits_{(x,y)\in X\times Y}\Big(\la (x_0^*,0), (x,y)\ra -\varphi (x,y)\Big) \\
& \leq \la x_0^*, x_V\ra +\epsilon+\frac{\eta}{2}\\
&< \la x_0^*, x_0\ra +\epsilon+\eta,
\end{array}
$$
which implies in turn that
$$
\inf_{x\in V}\mu(x)+ \mu^*(x_0^*) <\la x_0^*, x_0\ra +\epsilon+\eta.
$$
Combining this with \eqref{closedf} produces 
$$
\Bar{\mu}(x_0)+ \mu^*(x_0^*) \leq \la x_0^*, x_0\ra +\epsilon+\eta.
$$
Letting finally $\eta \to 0$ gives us the relationships
$$
\mu^{**}(x_0) + \mu^*(x_0^*)=\Bar{\mu}(x_0)+ \mu^*(x_0^*) \leq \la x_0^*, x_0\ra +\epsilon,
$$
which show that $x_0\in \partial_{\epsilon}\mu^{*}(x_0^*)$. This verifies the inclusion ``$\supset$'' in the claimed formula and thus completes the proof of the theorem.
\end{proof}\vspace*{-0.25in}

\section{Fenchel Duality in Constrained Optimization}\label{sec:duality}\vspace*{-0.1in}

In this section, we provide applications of the major results established above to the class of constrained optimization problems formulated by
\begin{equation}\label{primal}
\left\{
\begin{array}{ll}
\text{minimize} & f(y)=\phi(0,y), \vspace{1ex} \\
\text{subject to} & y \in \Omega=F(0)
\end{array}
\right.
\end{equation}
under certain relax convexity assumptions with the emphasis on near convexity. Denote by $\mathcal{V}_p$  the optimal value of the primal problem \eqref{primal}, i.e.,
\begin{equation*}\label{Vp}
\mathcal{V}_p:=\inf\{\phi(0, y)\; |\; y\in F(0)\}.
\end{equation*}
The first duality problem associated with \eqref{primal} is defined by
\begin{equation}\label{dual problem2}
\mbox{\rm maximize }-\mu^*(x^*)\;\mbox{ subject to }\;x^*\in X^*.
\end{equation}
Based on Theorem~\ref{thm42} representing the conjugate of the optimal value function via the infimal convolution, we define the second form of the dual problem by
\begin{equation}\label{dual problem}
\mbox{\rm maximize }-(\phi^*\square F^*)(x^*,0)\;\mbox{ subject to }\;x^*\in X^*.
\end{equation}
The optimal values of the dual problems \eqref{dual problem2}  and \eqref{dual problem} are denoted by, respectively,
\begin{equation*}\label{Vd}
\mathcal{V}^1_d:=\sup\big\{-\mu(x^*)\;\big|\; x^*\in X^*\big\}\; \mbox{\rm and }\;\mathcal{V}^2_d:=\sup\big\{-(\phi^*\square F^*)(x^*,0)\;\big|\; x^*\in X^*\big\}.
\end{equation*}

We now present several results linking the primal and dual problems to the optimal value function~$\mu$. The first proposition establishes {\em weak duality} and provides conditions for the equality of the two dual optimal values.

\begin{proposition}\label{proweakduality} We always have the relationships
\begin{equation}\label{weak duality}
\mathcal{V}_p=\mu(0),\quad\mathcal{V}^1_d=\mu^{**}(0) \;\mbox{ and }\;\mathcal{V}^2_d\leq\mathcal{V}^1_d\leq \mathcal{V}_p.
\end{equation}
Moreover, if $\phi$ is an int-nearly convex function, $F$ is an int-nearly convex set-valued mapping, and \eqref{QC1} is satisfied, then $\mathcal{V}^1_d=\mathcal{V}^2_d$.
\end{proposition}
\begin{proof}\vspace*{-0.1in}
The first equality in \eqref{weak duality} follows directly from
\begin{equation*}
\mu(0)=\inf\{\phi(0,y)\; |\; y\in F(0)\}=\mathcal{V}_p.
\end{equation*}
From the construction of the Fenchel biconjugate $\mu^{**}$, we have the second equality in \eqref{weak duality}:
\begin{equation*}
\begin{array}{ll}
\mu^{**}(0)&=\sup\{\la 0, x^*\ra-\mu^*(x^*)\; |\; x^*\in X^*\} =\mathcal{V}^1_d.
\end{array}
\end{equation*}
Since $\mu^{**}(0)\leq \mu(0)$, it follows that $\mathcal{V}_d^1\leq \mathcal{V}_p$. Moreover, we distill from the proof of Theorem~\ref{thm42} above the lower estimate
\begin{equation*}
(\phi^*\square F^*)(x^*, 0)\geq \mu^*(x^*)\; \mbox{\rm for all }x^*\in X^*,
\end{equation*}
which yields the fulfillment of the last assertion in \eqref{weak duality}:
\begin{equation*}
\mathcal{V}^2_d=\sup\{-(\phi^*\square F^*)(x^*,0)\;|\; x^*\in X^*\}\leq \sup\{-\mu(x^*)\; |\; x^*\in X^*\}=\mathcal{V}^1_d\leq \mathcal{V}_p.
\end{equation*}
Under the additional int-near convexity assumptions made, the condition  $\mathcal{V}^1_d=\mathcal{V}^2_d$ follows directly from Theorem~\ref{thm42}, and thus the proof is complete.
\end{proof}\vspace*{-0.1in}

The next theorem establishes a characterization of {\em strong duality} between problems \eqref{primal} and \eqref{dual problem2} by using the subdifferential of the optimal value function.

\begin{theorem}\label{resultdualrelation1}
Consider problem \eqref{primal} and its dual problem in form \eqref{dual problem2}. Then $x_0^*\in \partial \mu(0)$ if and only if $x_0^*$ is a solution to problem \eqref{dual problem2} and we have $\mathcal{V}_p = \mathcal{V}^1_d\in \R$.
\end{theorem}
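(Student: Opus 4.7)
The plan is to reduce the equivalence to the Fenchel--Young equality for $\mu$ at the origin and then invoke the weak duality chain already recorded in Proposition \ref{proweakduality}. The key observation is that for every $x_0^* \in X^*$, the Fenchel--Young inequality gives $\mu(0) + \mu^*(x_0^*) \geq \langle x_0^*, 0 \rangle = 0$, and the condition $x_0^* \in \partial \mu(0)$ is precisely the statement that this inequality becomes an equality (together with the implicit requirement $\mu(0) \in \mathbb{R}$ coming from $0 \in \mbox{dom}_{\mathbb{R}}\, \mu$, which is built into the definition of $\partial \mu(0)$).

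For the forward implication, I would assume $x_0^* \in \partial \mu(0)$. Unwinding the definition of the $0$-subdifferential yields $\mu(0) \in \mathbb{R}$ and $\langle x_0^*, x \rangle - \mu(x) \leq -\mu(0)$ for all $x \in X$, so that taking the supremum over $x$ produces $\mu^*(x_0^*) \leq -\mu(0)$; combined with Fenchel--Young this gives $\mu^*(x_0^*) = -\mu(0)$, i.e., $-\mu^*(x_0^*) = \mu(0) = \mathcal{V}_p$. Inserting this into the chain $-\mu^*(x_0^*) \leq \mathcal{V}^1_d \leq \mathcal{V}_p$ from Proposition \ref{proweakduality} forces $\mathcal{V}_p = \mathcal{V}^1_d = -\mu^*(x_0^*) \in \mathbb{R}$, which exhibits $x_0^*$ as an attaining point in the dual problem \eqref{dual problem2}.

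For the converse, assume that $x_0^*$ solves \eqref{dual problem2} and that $\mathcal{V}_p = \mathcal{V}^1_d \in \mathbb{R}$. Then $-\mu^*(x_0^*) = \mathcal{V}^1_d = \mu(0) \in \mathbb{R}$, hence $\mu(0) + \mu^*(x_0^*) = 0$. Expanding the definition of $\mu^*(x_0^*)$ as a supremum and rearranging yields $\mu(x) \geq \mu(0) + \langle x_0^*, x - 0 \rangle$ for every $x \in X$, which is exactly the defining inequality for $x_0^* \in \partial \mu(0)$.

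I do not anticipate any genuine obstacle: the argument is a direct manipulation of the Fenchel--Young identity at the origin combined with the three elementary relations of Proposition \ref{proweakduality}. The only point requiring a little care is to propagate the finiteness of $\mu(0)$ correctly on both sides of the equivalence, so that the equality $\mu(0) + \mu^*(x_0^*) = 0$ is never of an indeterminate form; this is secured by the convention $\partial_\epsilon f(x_0) = \emptyset$ for $x_0 \notin \mbox{dom}_{\mathbb{R}}\, f$ on the left and by the explicit hypothesis $\mathcal{V}^1_d \in \mathbb{R}$ on the right.
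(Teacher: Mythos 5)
Your proposal is correct and follows essentially the same route as the paper's own proof: both directions hinge on the observation that $x_0^*\in\partial\mu(0)$ is equivalent to $\mu^*(x_0^*)=-\mu(0)$ with $\mu(0)\in\mathbb R$, combined with the chain $-\mu^*(x_0^*)\le\mathcal{V}^1_d=\mu^{**}(0)\le\mu(0)=\mathcal{V}_p$ from Proposition \ref{proweakduality}. The only cosmetic difference is that you invoke the Fenchel--Young inequality explicitly where the paper squeezes via $\mu^{**}(0)\le\mu(0)$; these are the same estimate.
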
\vspace*{-0.2in}
\begin{proof} 
Given $x_0^*\in \partial \mu(0)$ tells us that $0\in \mbox{\rm dom}_{\R}\,\mu$ and
$$
\la x^*_0, x\ra-\mu(x)\leq -\mu(0)\; \mbox{ for all }\;x\in X.
$$
Taking the supremum with respect to $x\in X$ yields $\mu(0)\leq -\mu^*(x^*_0)$. Combining this with Proposition~\ref{proweakduality}, we get the relationships
$$
\mathcal{V}_p=\mu(0)\leq -\mu^*(x^*_0)\leq \mathcal{V}^1_d=\mu^{**}(0)\leq \mu(0)=\mathcal{V}_p.
$$
Therefore, $\mathcal{V}_p=\mathcal{V}^1_d=\mu(0)\in \R$ and $x_0^*$ is a solution to the dual problem \eqref{dual problem2}.

To verify the reverse implication, let $x_0^*$ be a solution to problem \eqref{dual problem2} and  the equality $\mathcal{V}_p = \mathcal{V}^1_d\in \R$ holds.
Then Proposition~\ref{proweakduality} gives us
$$
-\mu^*(x^*_0)= \mathcal{V}^1_d=\mathcal{V}_p=\mu(0).
$$
Thus $0\in \mbox{\rm dom}_{\R}\, \mu$ and the definition of Fenchel conjugate yields
$$
\sup\{\la x^*_0,x\ra-\mu(x)\mid x\in X\} =-\mu(0)
$$
while implying in turn that
\begin{equation*}
\la x^*_0, x\ra-\mu(x)\leq -\mu(0)\;\mbox{ for all }\;x\in X.
\end{equation*}
This means that $x^*_0 \in \partial \mu(0)$, which completes the proof.    
\end{proof}\vspace*{-0.1in}

The next theorem characterizes strong duality within the framework of near convexity.

\begin{theorem}\label{resultdualrelation}
In the setting of \eqref{optimalfunction}, assume that the cost function $\phi$ is int-nearly convex, that the constraint mapping $F$ is int-nearly convex as well, and that the qualification  condition \eqref{QC1} is satisfied. Given $x_0^*\in X^*$, the following assertions are equivalent:

{\bf(i)} $x_0^*\in \partial \mu(0)$.

{\bf(ii)} $x_0^*$ is a solution to problem \eqref{dual problem2}  and
\begin{equation*} 
\mathcal{V}_p=\mathcal{V}^1_d=\mathcal{V}^2_d\in \R.
\end{equation*}

{\bf(iii)} $(x_0^*,0)$ is a solution to problem \eqref{dual problem} and 
$$
\mathcal{V}_p =\mathcal{V}_1^d=\mathcal{V}^2_d\in \R.
$$ 
\end{theorem}\vspace*{-0.2in}
\begin{proof} Equivalence (i)$\Longleftrightarrow$(ii) follows from 
Proposition~\ref{proweakduality} and Theorem~\ref{resultdualrelation1}. 
Using Theorem~\ref{thm42} together with Proposition~\ref{proweakduality} justifies (ii) by
\begin{equation*}
-\mu^*(x^*_0)=-(\phi^*\square F^*)(x^*_0,0)\leq \mathcal{V}^2_d=\mathcal{V}^1_d.
\end{equation*}
Now suppose that (ii) holds and then get
\begin{equation*}
\mathcal{V}_p=\mathcal{V}^1_d=-\mu^*(x^*_0)=-(\phi^*\square F^*)(x^*_0,0)\leq \mathcal{V}^2_d=\mathcal{V}^1_d=\mathcal{V}_p,
\end{equation*}
which clearly verifies (iii). The proof of (iii)$\Longrightarrow$(ii) is straightforward. 
\end{proof}
\vspace*{-0.1in}

The following statement provides direct sufficient conditions for strong duality.

\begin{corollary}\label{SDCC} Consider the primal problem \eqref{primal problem} and its dual problems \eqref{dual problem2} and \eqref{dual problem}. If $\partial \mu(0)\neq \emptyset$, then we have the strong duality $\mathcal{V}_p=\mathcal{V}^1_d\in \R$.
If in addition both $\phi$ and $F$ are int-nearly convex and \eqref{QC1} holds, then
$$
\mathcal{V}_p=\mathcal{V}^1_d=\mathcal{V}^2_d\in \R.
$$  
\end{corollary}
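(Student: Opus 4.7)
The plan is to derive this corollary as a direct application of the two structural theorems already in hand, namely Theorem \ref{resultdualrelation1} for the first assertion and Proposition \ref{proweakduality} (together with the first assertion) for the second. No new analysis is needed; the proof is essentially a bookkeeping argument.

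For the first assertion, I would simply pick any $x_0^* \in \partial \mu(0)$, which is possible by hypothesis. The forward implication of Theorem \ref{resultdualrelation1} states that membership in $\partial \mu(0)$ implies both that $x_0^*$ is a solution of problem \eqref{dual problem2} and that the duality equality $\mathcal{V}_p = \mathcal{V}_d^1 \in \mathbb R$ holds. In particular, the finiteness of the common value is already built into the conclusion there, so the first statement follows immediately.

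For the second assertion, assuming additionally that $\phi$ and $F$ are int-nearly convex and that condition \eqref{QC1} is satisfied, I would invoke Proposition \ref{proweakduality}. Under precisely these hypotheses, that proposition asserts $\mathcal{V}_d^1 = \mathcal{V}_d^2$. Chaining this with the equality $\mathcal{V}_p = \mathcal{V}_d^1 \in \mathbb R$ already established in the first part gives $\mathcal{V}_p = \mathcal{V}_d^1 = \mathcal{V}_d^2 \in \mathbb R$, as required. Equivalently, one could apply the chain of equivalences (a) $\Longleftrightarrow$ (b) $\Longleftrightarrow$ (c) in Theorem \ref{resultdualrelation} to any element of $\partial \mu(0)$, and read off all three equalities at once.

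There is no real obstacle here; the only point to watch is to check that the hypotheses of Theorem \ref{resultdualrelation1} and Proposition \ref{proweakduality} are genuinely covered by the statement of the corollary (the first needs only subdifferentiability of $\mu$ at $0$, while the second needs int-near convexity of $\phi$ and $F$ plus \eqref{QC1}), so the two parts must be applied in the correct order. Once this is observed, the proof reduces to two invocations of previously established results.
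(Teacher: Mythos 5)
Your argument is correct and matches the paper's intent exactly: the paper states this corollary without proof, treating it as an immediate consequence of Theorem~\ref{resultdualrelation1} (for the first assertion) and Proposition~\ref{proweakduality} (for the identity $\mathcal{V}^1_d=\mathcal{V}^2_d$ under int-near convexity and \eqref{QC1}), which is precisely the two-step bookkeeping you carry out.
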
\vspace*{-0.1in}

\begin{remark} The above results on strong Fenchel duality for \eqref{primal} closely relate to the nonemptiness of the subdifferential of the optimal value function $\mu$ from \eqref{optimalfunction} at the origin. Note that the subdifferential definition in \eqref{subgrad} for $\ve=0$ is taken in the form of convex analysis, although the function in question is not assumed to be convex. On the the other hand, it follows from \cite[Theorem~2.4.1(iii)]{zalinescu2002convex} that if $\mu$ is proper on an LCTV space $X$ with the convex domain and if $\partial\mu(x)\ne\emptyset$ on {\em entire  domain} of $\mu$, then the representation in \eqref{subgrad} is {\em equivalent} to the convexity of $\mu$. If the space $X$ is Asplund and $\mu$ is lower semicontinuous on $X$, this equivalence holds when $\partial\mu(x)$ is nonempty on a {\em dense subset} of the domain; see the proof in \cite[Theorem~3.56]{mordukhovich2006variational} based on the approximate mean value theorem. In contrast, our duality results above use the nonemptiness of $\partial\mu$ only at {\em one point}.\vspace*{-0.1in}
\end{remark}

The final result of this section provides sufficient conditions for Fenchel strong duality under explicit assumptions on the original problem.

\begin{proposition}\label{str-duality-convex}
Suppose that $0\in\dom_\R\,\mu$ and that the set $F(0)$ is compact. Suppose further that $F$ is convex and upper semicontinuous at $0$ and that $\phi$ is convex lower semicontinuous at $(0, y)$ for all $y\in F(0)$. Then we have the strong duality $\mathcal{V}_d^1=\mathcal{V}_p\in \R$.
\end{proposition}
\begin{proof}\vspace*{-0.1in}
Under the given assumptions, Theorem~\ref{thmmuconvex}(i) ensures the convexity of $\mu$, while Proposition~\ref{prolowerseimicontinuity} guarantees its lower semicontinuity at $0$.
Moreover, since $0\in\dom_\R\mu$, we apply \cite[Theorem 2.3.4(ii)]{zalinescu2002convex} to obtain $\mu(0)=\mu^{**}(0)$. This yields by Proposition~\ref{proweakduality} the claimed strong duality, and thus completes the proof.
\end{proof}\vspace*{-0.3in}

\section{Applications to Lagrangian Duality}\label{sec:lagr}\vspace*{-0.1in}

Here we consider the optimization problem with functional constraints
\begin{equation}\label{Q4}
\left\{\begin{array}{ll}
\mbox{\rm minimize } &f(y)\\
\mbox{\rm subject to} &  g_i(y) \leq 0\;\mbox{ for }\;i=1, \ldots,m 
\end{array}\right.
\end{equation}
with the functions $f\colon Y\to \R$ and $g_i\colon Y\to \R$ defined on an LCTV space. Denote by ${\cal V}_p$ he optimal value in \eqref{Q4}. Setting $X:=\R^m$ and $g:=(g_1, \ldots, g_m)$, define the {\em Lagrangian}
\begin{equation}\label{lagr}
\mathcal{L}(\lambda,y):=f(y)+\sum_{i=1}^m \lambda_i g_i(y)\;\text{ for }\;(\lambda,y)\in X^*\times Y, 
\end{equation}
where $\lambda:=(\lambda_1\ldots,\lambda_m)\in X^*=\R^m$. The {\em dual Lagrangian} associated with \eqref{lagr} is
\begin{equation}\label{dual-lagr}
\Hat{\mathcal{L}}(\lambda):=\inf_{y\in Y}\mathcal{L}(\lambda,y)\; \text{ for }\;\lambda\in X^*.
\end{equation}

Our goal in what follows is to apply the Fenchel duality results obtained in Section~\ref{sec:duality} to the Lagrangian duality setting by us involving the machinery of optimal value functions. To proceed in this way, define the cost function $\phi\colon X\times Y\to \R$ by
\begin{equation}\label{key29}
\phi(x, y):=f(y)\; \text{ for all }\;(x,y)\in X\times Y    
\end{equation}
 and the constraint mapping $F\colon X\tto Y$ by
\begin{equation}\label{Lagrange Constraint}
F(x):=\{y\in Y\; |\; g_i(y)\leq x_i\; \mbox{\rm for all }\;i=1, \ldots, m\},\quad x=(x_1, \ldots,x_m)\in X.
\end{equation}

First we calculate the Fenchel conjugates of the function $\phi$ and the set-valued mapping $F$ defined in \eqref{key29} and \eqref{Lagrange Constraint}, respectively.

\begin{proposition}\label{fen-lagr} Let $\phi$ be taken from  \eqref{key29} and $F$ be taken from \eqref{Lagrange Constraint}. Then for any $y^*\in Y^*$ and $\lambda=(\lambda_1, \ldots, \lambda_m)\in X^*=\R^m$, we have the formulas
\begin{equation}\label{key31}
\phi^*(\lambda,y^*)=\begin{cases}
\sup\big\{\la y^*,y\ra-f(y)\mid y\in Y\big\}=f^*(y^*)&\text{if }\;\lambda=0,\\
\infty &\text{ otherwise},
\end{cases}
\end{equation}
\begin{equation} \label{key32}
F^*(\lambda,y^*)=\begin{cases}
\sup\Big\{\sum\limits_{i=1}^m\lambda_ig_i(y)+ \la y^*, y\ra\; |\; y\in Y\Big \} &\mbox{if }\;\lambda\leq 0,\\
\infty &\mbox{otherwise}.      
\end{cases}
\end{equation}
\end{proposition}\vspace*{-0.1in}
\begin{proof} It follows from \eqref{key29} and the conjugate definitions that
$$
\begin{array}{ll}
\phi^*(\lambda,y^*)&=
\sup\Big\{\sum\limits_{i=1}^m\lambda_i x_i +\la y^*,y\ra-\phi(x,y)\mid (x,y)\in X\times Y\Big\}\\
 &=\sup\Big\{\sum\limits_{i=1}^m\lambda_i x_i +\la y^*,y\ra-f(y)\mid (x,y)\in X\times Y\Big\},
\end{array}
$$
where $x=(x_1\ldots,x_m)\in X=\R^m$. We clearly have that if $(\lambda_1,\ldots,\lambda_m)\neq (0,\ldots,0)$, then $\phi^*(\lambda,y^*)=\infty$. Otherwise, it follows that
$$
\phi^*(\lambda,y^*)=
\sup\{\la y^*,y\ra-f(y)\mid y\in Y\},
$$
and hence \eqref{key31} holds. We also get by definition that
\begin{equation*}
\begin{aligned}
F^*(\lambda,y^*)=\sup\Big\{\sum\limits_{i=1}^m\lambda_i x_i+ \la y^*, y\ra\; |\; g(y)\leq x,\; x\in X,\; y\in Y\Big\}.    
\end{aligned}
\end{equation*}
Consider the case where $\lambda_i>0$ for some $i=1, \ldots, m$ and then see that $F^*(\lambda,y^*)=\infty$. In the opposite case where $\lambda_i\leq 0$ for all $i=1, \ldots, m$), it follows that
$$
\begin{array}{ll}
F^*(\lambda,y^*)&=\sup\{\la \lambda,x\ra+\la y^*, y\ra\; |\; g(y)\leq x,\; x\in X, \, y\in Y\} \\[0.1in]
&=\sup\{\la \lambda,x\ra+\la y^*, y\ra\; |\; g(y)+\alpha= x,\; \alpha\geq 0,\; x\in X, \, y\in Y\} \\[0.1in]
&=\sup\{\la \lambda,g(y)\ra+\la \lambda,\alpha\ra+\la y^*, y\ra\; |\; \alpha\geq 0,\; y\in Y\} \\[0.1in]
&=\sup\{\la \lambda,g(y)\ra+\la y^*, y\ra\; |\; y\in Y\} \\[0.1in]
&= \sup\Big\{\sum\limits_{i=1}^m \lambda_ig_i(y)+\la y^*, y\ra\; |\; y\in Y\Big\},
\end{array}
$$
which readily justifies the fulfillment of \eqref{key32} and thus completes the proof.
\end{proof}\vspace*{-0.1in}

Next we show that the dual problem \eqref{dual problem} with the data from \eqref{key31} and \eqref{key32}  can be reformulated as the Lagrangian dual form by using the dual Lagrangian \eqref{dual-lagr}.

\begin{theorem}\label{LDF} Let the cost function $\phi$ and the constraint mapping $F$ be given by \eqref{key29} and \eqref{Lagrange Constraint}, respectively. Then for every $\lambda=(\lambda_1,\ldots,\lambda_m)\in X^*$, we have
\begin{equation*}\label{duallagrane1}
\mu^*(-\lambda)=\begin{cases}
-\Hat{\mathcal L}(\lambda) & \text{  if } \lambda\geq 0,\\
\infty & \text{ otherwise.}
\end{cases}
\end{equation*}
\end{theorem}\vspace*{-0.2in}
\begin{proof} It follows from \eqref{key29} and \eqref{Lagrange Constraint} that the optimal value function \eqref{optimalfunction} reduces to
\begin{equation*}\label{optc}
\mu(x)=\inf\{f(y)\; |\; g(y)\leq x,\; y\in Y\} \; \text{ for }\;x\in X.
\end{equation*}
Using the corresponding definitions leads us to the equalities     
$$
\begin{array}{ll}
\mu^*(-\lambda)&=\sup\limits_{x\in X}\{-\la \lambda, x\ra-\mu(x)\} \\[0.1in]
&=\sup\limits_{x\in X}\Big\{-\la \lambda, x\ra-\inf\limits_{y\in Y,\; g(y)\leq x}f(y)\Big\} \\[0.1in]
&=\sup\limits_{x\in X}\sup\limits_{y\in Y,\; g(y)\leq x}\{-\la \lambda, x\ra-f(y)\}\\[0.1in]
&= \sup\limits_{y\in Y}\sup\limits_{x\in X,\; g(y)\leq x}\{-\la \lambda, x\ra-f(y)\}.
\end{array}$$
Due to the obvious representation 
$$
\sup\limits_{x\in X,\;g(y)\leq x}\{-\la \lambda, x\ra-f(y)\}=\begin{cases}
-\la \lambda,g(y)\ra -f(y) &\text{ if } \lambda \geq 0,  \\
\infty&\text{ otherwise}, 
\end{cases}$$ 
we arrive at the claimed expression
$$
\mu^*(-\lambda)=\sup\limits_{y\in Y}\sup\limits_{x\in X,\; g(y)\leq x}\{-\la \lambda, x\ra-f(y)\}=\begin{cases}
-\Hat{\mathcal L}(\lambda) &\text{ if } \lambda \geq 0,  \\
\infty&\text{ otherwise} 
\end{cases}
$$
and thus complete the proof of the theorem. 
\end{proof}\vspace*{-0.1in}

Based on Theorem~\ref{LDF}, the dual problem \eqref{dual problem2} for \eqref{Q4} can be written as
\begin{equation*}
\mbox{\rm maximize }\Hat{\mathcal{L}}(\lambda)\;
\mbox{ subject to }\;\lambda\geq 0
\end{equation*}
with the associated value function $\mathcal{V}_d:=\sup_{\lambda\geq 0}\Hat{\mathcal{L}}(\lambda)$. The following result, presented in  \cite[Theorem~4.3.7]{borwein2006convex} in finite dimensions, is a consequence of our more general developments.

\begin{corollary}
Consider problem \eqref{Q4}, where $f$ and $g_i$, $i=1,\ldots,m,$ are convex and continuous. Impose the Slater constraint qualification: there exists $x_0\in X$ such that $g_i(x_0)<0$ for all $i=1, \ldots, m$. Then we have the strong Lagrangian duality $\mathcal{V}_p=\mathcal{V}_d$.
\end{corollary}\vspace*{-0.2in}
\begin{proof} In the case where $\mathcal{V}_p=\mu(0)=-\infty$, it follows from 
Proposition~\ref{proweakduality} and Theorem~\ref{LDF} that $\mathcal{V}_d=-\infty$, and hence the conclusion holds. Consider the case where $\mathcal{V}_p>-\infty$. Since $\mathcal{V}_p=\mu(0)<\infty$ in this case, we get $0\in \mbox{\rm dom}_\R\,\mu$. Denote further the numbers $\gamma_i:=g_i(x_0)<0$ for $i=1, \ldots, m$ and define the open set
\begin{equation*}
V:=\prod_{i=1}^m (\gamma_i, \infty).
\end{equation*}
It is easy to see that $0\in V\subset \mbox{\rm dom}\,\mu$ and hence $0\in \mbox{\rm int}(\dom\, \mu)$, which yields $\partial \mu(0)\neq\emptyset$. Thus the conclusion follows directly from Corollary~\ref{SDCC} since $\mathcal{V}_d=\mathcal{V}^1_d$ in this setting.     
\end{proof}\vspace*{-0.3in}

\section{Concluding Remarks}\label{secconcluion}\vspace*{-0.01in}

This paper has systematically examined several key properties of the optimal value functions for perturbed optimization problems in LCTV spaces. These properties include near convexity, semicontinuity, and Lipschitz behavior in both convex and nonconvex settings with the emphasis on int-near  convexity in the assumptions  and conclusions. We calculated the Fenchel conjugates and $\ve$-subdifferentials of the optimal value functions and present applications to both Fenchel and Lagrangian duality in constrained optimization.

The obtained results enhance theoretical foundations of variational and near convex analysis providing a basis for future research on duality theory, generalized subdifferential and conjugate calculi with subsequent applications to numerical methods of optimization.\vspace*{0.4in}

\textbf{Data Availability} The paper has no associated data.

\section*{Declarations}

\textbf{Competing Interests} There is no competing interest in this paper.


\begin{thebibliography}{10}

\bibitem{an2020differential}
An, D.T.V., Yao, J.-C., Yen, N.D.:
Differential stability of a class of convex optimal control problems.
Appl. Math. Optim. \textbf{81}, 1--22 (2020)

\bibitem{bauschke2017convex}
Bauschke, H.H., Combettes, P.L.:
Convex Analysis and Monotone Operator Theory in Hilbert Spaces, 2nd edn.
Springer, New York (2017)

\bibitem{heinz}
Bauschke, H.H., Moffat, S.M., Wang, X.:
Near equality, near convexity, sums of maximally monotone operators, and averages of firmly nonexpansive mappings.
Math. Program. \textbf{139}, 55--70 (2013)

\bibitem{bonnans2000perturbation}
Bonnans, J.F., Shapiro, A.:
Perturbation Analysis of Optimization Problems.
Springer, New York (2000)

\bibitem{borwein2006convex}
Borwein, J.M., Lewis, A.S.:
Convex Analysis and Nonlinear Optimization, 2nd edn.
Springer, New York (2006)

\bibitem{bot}
Bo\c{t}, R.I., Grad, S.M., Wanka, G.:
Fenchel's duality theorem for nearly convex functions.
J. Optim. Theory Appl. \textbf{132}, 509--515 (2007)

\bibitem{hai25}
Hai, L.P., Lara, F., Mordukhovich, B.S.:
Regular subgradients of marginal functions with applications to calculus and bilevel programming.
J. Optim. Theory Appl. \textbf{205}, 1--30 (2025)

\bibitem{huy2023nearly}
Huy, N.Q., Nam, N.M., Yen, N.D.:
Nearly convex optimal value functions and some related topics.
arXiv:2303.07793 (2023)

\bibitem{LMN}
Long, V.S.T., Mordukhovich, B.S., Nam, N.M.:
Generalized relative interiors and generalized convexity in infinite-dimensional spaces.
Optimization \textbf{73}, 3667--3697 (2024)

\bibitem{minty}
Minty, G.J.:
On the maximal domain of a ``monotone'' function.
Michigan Math. J. \textbf{8}, 35--137 (1961)

\bibitem{mordukhovich2006variational}
Mordukhovich, B.S.:
Variational Analysis and Generalized Differentiation, I: Basic Theory; II: Applications.
Springer, Berlin (2006)

\bibitem{mordukhovich2018variational}
Mordukhovich, B.S.:
Variational Analysis and Applications.
Springer, Cham (2018)

\bibitem{mordukhovich2005variational}
Mordukhovich, B.S., Nam, N.M.:
Variational stability and marginal functions via generalized differentiation.
Math. Oper. Res. \textbf{30}, 800--816 (2005)

\bibitem{mordukhovich2022convex}
Mordukhovich, B.S., Nam, N.M.:
Convex Analysis and Beyond, I: Basic Theory.
Springer, Cham (2022)

\bibitem{mordukhovich2012variational}
Mordukhovich, B.S., Nam, N.M., Phan, H.M.:
Variational analysis of marginal functions with applications to bilevel programming.
J. Optim. Theory Appl. \textbf{152}, 557--586 (2012)

\bibitem{mordukhovich2009subgradients}
Mordukhovich, B.S., Nam, N.M., Yen, N.D.:
Subgradients of marginal functions in parametric mathematical programming.
Math. Program. \textbf{116}, 369--396 (2009)

\bibitem{nam2024notion}
Nam, N.M., Sandine, G., Thieu, N.N., Yen, N.D.:
A notion of Fenchel conjugate for set-valued mappings.
J. Optim. Theory Appl. \textbf{203}, 1263--1292 (2024)

\bibitem{nam2025near}
Nam, N.M., Thieu, N.N., Yen, N.D.:
Near convexity and generalized differentiation.
J. Convex Anal. \textbf{32}, 605--630 (2025)

\bibitem{rockafellar1997convex}
Rockafellar, R.T.:
Convex Analysis.
Princeton University Press, Princeton (1970)

\bibitem{rock70}
Rockafellar, R.T.:
On the virtual convexity of the domain and range of a nonlinear maximal monotone operator.
Math. Ann. \textbf{185}, 81--90 (1970)

\bibitem{Rockvariational}
Rockafellar, R.T., Wets, R.J.-B.:
Variational Analysis.
Springer, Berlin (1998)

\bibitem{thibault2023}
Thibault, L.:
Unilateral Variational Analysis in Banach Spaces, in two volumes.
World Scientific, Singapore (2023)

\bibitem{zalinescu2002convex}
Z\v{a}linescu, C.:
Convex Analysis in General Vector Spaces.
World Scientific, Singapore (2002)

\end{thebibliography}
\end{document}